\theoremstyle{plain} 
\newtheorem{theorem}{Theorem}
\newtheorem{lemma}{Lemma}[section]
\newtheorem{proposition}{Proposition}
\newtheorem*{conjecture*}{Conjecture}
\newtheorem*{theorem*}{Theorem}
\newtheorem*{question*}{Question}
\theoremstyle{plain}
\theoremstyle{remark}
\newtheorem{remark}{Remark}
\theoremstyle{definition}
\newtheorem*{assumption*}{Assumption}
\newtheorem*{notations*}{Notations}
\newtheorem*{acknowledgment*}{Acknowledgments}
\numberwithin{equation}{section}
\crefname{section}{Section}{Sections}
\crefname{theorem}{Theorem}{Theorems}
\crefname{corollary}{Corollary}{Corollaries}
\crefname{lemma}{Lemma}{Lemmas}
\crefname{proposition}{Proposition}{Propositions}
\crefname{claim}{Claim}{Claims}
\crefname{definition}{Definition}{Definitions}
\crefname{notation}{Notation}{Notations}
\crefname{problem}{Problem}{Problems}
\crefname{note}{Note}{Notes}
\crefname{remark}{Remark}{Remarks}
\crefname{example}{Example}{Examples}
\crefname{equation}{}{}
\crefname{enumi}{}{}
\crefname{enumii}{}{}
\crefname{enumiii}{}{}
\newcommand\swapcommand[2]{%
\let\swaptemp#1
\let#1#2
\let#2\swaptemp
}
\let\sl\l
\renewcommand\l{%
		\leavevmode
	\ifmmode
		\mleft
	\else
		\sl
	\fi
}
\let\sL\L
\renewcommand\L{%
		\leavevmode
	\ifmmode
	\mathscr{L}
	\else
		\sL
	\fi
}
\let\rmi\i
\renewcommand\i{%
		\leavevmode
	\ifmmode
	\mathrm{i}
	\else
		\rmi
	\fi
}
\newcommand\set[2]{%
	\left\{ #1:  #2 \right\}
}
\newcommand{\CC}{\mathbb{C}}
\newcommand{\RR}{\mathbb{R}}
\newcommand{\s}{\sigma}
\newcommand{\lam}{\lambda}
\newcommand{\Lam}{\Lambda}
\newcommand{\vp}{\varphi}
\newcommand{\ceq}{\coloneqq}
\newcommand{\eqc}{\eqqcolon}
\newcommand{\sd}{\, \mathrm{d}}
\renewcommand{\sL}{\mathscr{L}}
\newcommand{\ds}{\displaystyle}
\renewcommand{\a}{\alpha}
\renewcommand{\b}{\beta}
\renewcommand{\k}{\kappa}
\renewcommand{\r}{\mright}
\renewcommand{\d}{\mathrm{d}}
\renewcommand{\Re}{\operatorname{Re}}
\renewcommand{\Im}{\operatorname{Im}}
\renewcommand{\epsilon}{\varepsilon}
\renewcommand{\hat}{\widehat}
\renewcommand{\bar}{\overline}
\title{Explicit extreme values of the argument of the Riemann zeta-function}
\author[S. Inoue, H. Kobayashi, Y. Toma]{Sh\={o}ta Inoue, Hirotaka Kobayashi, and Yuichiro Toma}
\address[S. Inoue]{Department of Liberal Arts and Basic Sciences, College of Industrial Technology, Nihon University, 2-11-1 Shin-ei, Narashino, Chiba 275-8576, Japan}
\email{inoue.shota@nihon-u.ac.jp}
\address[H. Kobayashi]{National Fisheries University, 2-7-1, Nagatahon-machi, Shimonoseki-shi, Yamaguchi 759-6595, Japan}
\email{h.kobayashi@fish-u.ac.jp}
\address[Y. Toma]{Global Education Center, Waseda University, 1-6-1 Nishiwaseda, Shinjuku-ku, Tokyo 169-8050, Japan}
\email{yuichiro.toma@aoni.waseda.jp}
\keywords{the Riemann zeta-function, gaps of zeros, extreme values, the resonance method}
\subjclass{11M06; 11M26}
\begin{document}

\begin{abstract}
	We investigate explicit extreme values of the argument of the Riemann zeta-function in short intervals.
	As an application, we improve the result of Conrey and Turnage-Butterbaugh concerning $r$-gaps between zeros of the Riemann zeta-function.
\end{abstract}

\maketitle

\section{\textbf{Introduction and statement of results}}
	The argument of the Riemann zeta-function $\zeta$ on the critical line, usually denoted as $S(t)$,
	is a fascinating and intricate aspect of one of the most celebrated functions in number theory.
	The function is defined by $S(t) \ceq \frac{1}{\pi}\arg{\zeta(\frac{1}{2} + \i t)} = \frac{1}{\pi}\Im \int_{\infty}^{1/2}\frac{\zeta'}{\zeta}(\a + \i t)\sd\a$
	if $t$ is equal to neither $0$ nor the imaginary part of a zero of $\zeta$.
	If $t$ is equal to $0$ or the imaginary part of a zero, then $S(t) = (S(t + 0) + S(t - 0)) / 2$.
	By the argument principle, this function is influenced by the distribution of zeros of the Riemann zeta-function.
	The relationship is visualized by the Riemann-von Mangoldt formula:
	\begin{align}
		N(T)
		= \frac{T}{2\pi}\log\l( \frac{T}{2\pi e} \r) + \frac{7}{8} + S(T) + O\l( \frac{1}{T} \r),
	\end{align}
	where $N(T)$ denotes the number of zeros $\rho = \b + \i \gamma$ satisfying $0 < \gamma < T$ of $\zeta$ counted with multiplicity.
	If $T$ is equal to the imaginary part of a zero, $N(T) = (N(T + 0) + N(T - 0)) / 2$.

	In this paper, we discuss extreme values of $S(t + h) - S(t)$.
	The values of $S(t + h) - S(t)$ capture information on the number of zeros, as expressed by
	\begin{align}	\label{RvMFh}
		N(T + h) - N(T)
		= \frac{h}{2\pi}\log{T} + S(T + h) - S(T) + O\l( h + \frac{1}{T} \r).
	\end{align}
	Since detailed information about the zeros of the Riemann zeta-function has rich applications to the prime numbers, the study of $S(t + h) - S(t)$ is therefore also important.
	For this object, Selberg showed in an unpublished work that there exists a positive number $c = c(a, b)$, depending on arbitrary absolute positive constants $a, b$,
	such that for any large $T$ and any $h \in \l[a (\log{T})^{-1}, b(\log\log{T})^{-1}\r]$,
	\begin{align}	\label{STEV}
		\sup_{t \in [T, 2T]} \l\{ \pm\l( S(t + h) - S(t) \r) \r\}
		\geq c (h \log{T})^{\eta}
	\end{align}
	holds with $\eta = 1/2$ under the Riemann Hypothesis (RH).
	Later, Tsang gave a proof of this result in \cite{Ts1986} and also proved that \cref{STEV} holds with $\eta = 1/3$ unconditionally.
	More recently, the first author in \cite{I2024} used his results to study the distribution of zeros.
	In the paper, under RH, the first author also considered the explicit extreme values of $S(t + h) - S(t)$ and showed that $c = c(a, b)$ in inequality \cref{STEV} can be calculated by (4.1) in \cite[\S 4]{I2024},
	which is essentially obtained by following the argument of Selberg/Tsang straightforwardly.
	With this approach, one inevitably has $c < 1 / \sqrt{2 e \pi}$ when $a$ is large, and $b$ is small.
	The aim of this paper is to improve the explicit extreme values by using the method of Montgomery-Odlyzko \cite{MO}.
	The first result is the following.

	\begin{theorem}	\label{ORSi}
		Assume RH.
		For any large $T$ and any $h \in [C / \log{T}, c / \log\log{T}]$ with positive constants $C$ large and $c$ small,
		we have
		\begin{align}
			\sup_{T \leq t \leq 2T}\{ \pm(S(t + h) - S(t)) \}
			\geq \l( 1 - E \r)\sqrt{ \frac{h}{\pi} \log{T}},
		\end{align}
		where the error term $E$ satisfies
		\begin{align}
			E
			\ll \sqrt{h \log\log{T}} + \min\l\{\sqrt{\frac{\log^{3}(h \log{T})}{h \log{T}}}, \frac{(\log\log{T})^{3/2}}{h^{3/2} \log{T}}\r\}.
		\end{align}
	\end{theorem}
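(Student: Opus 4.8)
The plan is to run the resonance method (as in Montgomery--Odlyzko and, for the optimization of the amplitudes, Soundararajan). First I would replace $S(t+h)-S(t)$ by a Dirichlet polynomial. Under RH, Selberg's explicit formula gives, for a parameter $2\le x\le T^{1-\e}$,
\[
S(t+h)-S(t)=-\frac1\pi\Im\sum_{2\le n\le x}\frac{\Lambda(n)(n^{-\i h}-1)}{n^{1/2+\i t}\log n}+\mathcal E_x(t),
\]
where $\mathcal E_x$ is small in mean square (one can arrange $\int_T^{2T}|\mathcal E_x(t)|^2\,dt$ to be $o(Th\log T)$ by choosing $x$ a suitable power of $T$, or $x=e^{O(1/h)}$ in the alternative regime). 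Writing $a_n$ for the coefficients above, the dominant part is over primes, where $|a_p|=2|\sin(h\log p/2)|/\sqrt p$ and $a_p$ carries the phase $p^{-\i h/2}$ (reflecting the symmetry point $t+h/2$); the prime‑power terms are genuinely of lower order.

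Next I would set up the resonance inequality. With a smooth $\Phi\ge 0$ supported in $[1,2]$ (so $\widehat\Phi$ decays rapidly) and a Dirichlet polynomial $R(t)=\sum_{m\le N}r(m)m^{-\i t}$ with $N=T^{1-\e}$, one has $\sup_{T\le t\le 2T}\bigl(\pm(S(t+h)-S(t))\bigr)\ge \pm\,\bigl(\int(S(t+h)-S(t))|R(t)|^2\Phi(t/T)\,dt\bigr)\big/\bigl(\int|R(t)|^2\Phi(t/T)\,dt\bigr)$. I would take $R$ to be the truncation to $m\le N$ of the multiplicative resonator $\prod_{p\le x}(1+b_pp^{-\i t})$ with $b_p=\rho_p\,p^{-\i h/2}$ (and $b_p=-\rho_p\,p^{-\i h/2}$ for the $-$ sign), $0\le\rho_p\le 1$, so that $r(m)=\prod_{p\mid m}b_p$ on squarefrees supported on primes $\le x$. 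Since $N<T$ and $\widehat\Phi$ decays fast, the mean value theorem gives the denominator $=(1+o(1))\,\widehat\Phi(0)\,T\sum_{m\le N}|r(m)|^2$, and in the numerator every off‑diagonal frequency, as well as every prime‑power term $n=p^k$ with $k\ge2$ (which cannot divide a squarefree $m$), drops out; the contribution of $\mathcal E_x$ and of the tail $N<n\le x$ is absorbed by Cauchy--Schwarz plus a mean value estimate. What survives is the diagonal $n=p$, $m_1=pm_2$, and after choosing the phases as above the ratio reduces to $\frac1\pi\sum_{p\le x}\frac{2\sin(h\log p/2)\,\rho_p}{\sqrt p\,(1+\rho_p^2)}\,(1+o(1))$, valid provided the truncation $m\le N$ does not lose much — that is, provided the ``length'' $\log m=\sum_{p\mid m}\log p$ of a typical term stays below $\log N$, which needs a tail bound for the random sum $\sum_{p\in S}\log p$ (prime $p$ present with probability $\rho_p^2/(1+\rho_p^2)$), whose mean is $\asymp h/\lambda^2$ and whose variance is $\asymp \lambda^{-2}\sum_{p\le x}\sin^2(h\log p/2)/p$.

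Then I would optimize. The Euler--Lagrange condition for $\sum_p\frac{2\sin(h\log p/2)\rho_p}{\sqrt p(1+\rho_p^2)}$ under the constraint $\sum_p\rho_p^2\log p\le\log N=(1-o(1))\log T$ forces $\rho_p\approx\sin(h\log p/2)/(\lambda\sqrt p\log p)$ (automatically $<1$), so that by Cauchy--Schwarz the ratio is, up to lower order,
\[
\frac1\pi\sqrt{(1-o(1))\log T\cdot\sum_{p}\frac{4\sin^2(h\log p/2)}{p\log p}}\,,
\]
and the arithmetic sum satisfies $\sum_{p}\frac{4\sin^2(h\log p/2)}{p\log p}=2\sum_p\frac{1-\cos(h\log p)}{p\log p}=\pi h+O(h^2)$; one sees this either by splitting at $e^{1/h}$ and applying Mertens' theorems on each piece, or by writing $\sum_p\frac{p^{-1-\i h}}{\log p}=\int_{1+\i h}^{\infty}P(w)\,dw$ with $P$ the prime zeta function and using $P(w)\sim\log\frac1{w-1}$ near $w=1$. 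This produces the main term $(1-o(1))\sqrt{h\log T/\pi}$. Collecting the losses then gives $E$: an $O(h)$ contribution from the secondary term $O(h^2)$ in the arithmetic sum and from the bounded range of primes where $\rho_p$ is not small; a $\sqrt{h\log\log T}$ contribution from the mean‑square control of $\mathcal E_x$ weighted by $|R|^2$; and the cost of the resonator‑length truncation, which can be kept to $\sqrt{\log^3(h\log T)/(h\log T)}$ if one uses $x$ a power of $T$ (so the variance above is $\asymp\lambda^{-2}\log(h\log T)$) or to $(\log\log T)^{3/2}/(h^{3/2}\log T)$ if one takes $x=e^{O(1/h)}$ (so the variance is only $\asymp\lambda^{-2}$), whence the $\min\{\cdot,\cdot\}$.

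The step I expect to be the main obstacle is precisely the last one: the natural multiplicative resonator has support $\prod_{p\le x}p$, far larger than $T$, so it must be truncated to $m\le N$, and quantifying the cost of this truncation \emph{uniformly} in $h\in[C/\log T,\ c/\log\log T]$ and sharply enough to fit inside the stated $E$ is delicate; it is exactly this that forces the two complementary choices of $x$ giving the minimum. A secondary, more routine difficulty is pinning the constant in $\sum_p\sin^2(h\log p/2)/(p\log p)$ to $\pi h/4$ with the right error, and bounding the Selberg‑formula error $\mathcal E_x$ in the weighted mean square against $|R|^2$ rather than against Lebesgue measure.
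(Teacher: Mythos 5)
Your overall strategy is the same resonance method the paper uses (resonator coefficients proportional to $\sin(\tfrac h2\log p)/(\sqrt p\,h\log p)$, Cauchy--Schwarz optimization yielding the main term $\sqrt{h\log T/\pi}$), but two of your steps would not go through as described. The first is your reduction of $S(t+h)-S(t)$ to a Dirichlet polynomial plus an error $\mathcal{E}_x$ controlled only in unweighted mean square: ``Cauchy--Schwarz plus a mean value estimate'' does not absorb $\int \mathcal{E}_x(t)|R(t)|^2\Phi\,\mathrm{d}t$, because after Cauchy--Schwarz you are left with the twisted moment $\int|\mathcal{E}_x(t)|^2|R(t)|^2\Phi\,\mathrm{d}t$, and the only a priori bound $|R(t)|^2\le L\sum_{n\le L}|f(n)|^2$ loses a factor of size up to $L$, destroying the main term; a pointwise RH bound for $\mathcal{E}_x$ (typically $O(\log T/\log x)$) is also far too large, since the admissible error $E\sqrt{h\log T/\pi}$ can be as small as $h\sqrt{\log T\log\log T}=o(1)$ when $h\asymp 1/\log T$. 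The paper sidesteps this entirely: under RH it applies Tsang's identity (Lemma \ref{lem_STF}) to $\int\log\zeta(\tfrac12+\mathrm{i}(t\pm\tfrac h2))|R(t)|^2\Phi\,\mathrm{d}t$, where the zero-sum vanishes on RH and the remaining term is exponentially small because of the Gaussian weight, so the weighted integral equals the resonated prime sum over $km\le L$ up to $O(T^{-1}\sum_{n\le L}|f(n)|^2)$ (Proposition \ref{EFCVStRtPhi}); there is no $\mathcal{E}_x$ at all. Keeping your route would require a genuine twisted second-moment estimate for the explicit-formula error against $|R|^2$, which is not routine.

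The second problem is the resonator truncation, which you correctly identify as the main obstacle but whose proposed treatment sits exactly at the critical threshold: with the optimal $\rho_p$ and the constraint saturated, the mean of $\sum_{p\mid m}\log p$ under the measure $|r(m)|^2$ is already $(1-o(1))\log N$, so a tail/concentration bound cannot show most of the mass lies in $m\le N$ without first backing off the constraint, and quantifying that loss uniformly in $h$ is precisely the point at issue. The paper does this quantitatively by building a Rankin tilt $p^{-\kappa h}$ into $f_{\pm}(p)$ and applying Rankin's trick to the condition $km\le L$ (Proposition \ref{ELBRTsiRktrk}), with the parameters $\kappa$, $y$, $Q$ balanced to produce the stated error; the $\sqrt{h\log\log T}$ term there comes from the lower cutoff $M=\exp(\sqrt{\log\log L/h})$ on the resonator primes and PNT errors, not from an explicit-formula error. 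Moreover, your second regime $x=e^{O(1/h)}$ cannot yield the branch $(\log\log T)^{3/2}/(h^{3/2}\log T)$: if the resonator primes stop at $e^{O(1/h)}$, then $\sum_p\sin^2(\tfrac h2\log p)/(p\log p)$ falls short of $\pi h/4$ by a constant factor (since $\vp(\tfrac h{2\pi}\log x)$ stays bounded away from $1/2$), so the main term degrades to $(1-c)\sqrt{h\log T/\pi}$ rather than $(1-o(1))\sqrt{h\log T/\pi}$; in the paper both branches of the minimum arise from the same prime support $(M,L]$ with $L=T/(\log T)^2$, via different balancing of the tilt parameters.
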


	This theorem can be applied to evaluate gaps of zeros of the Riemann zeta-function.
	Let $0 < \gamma_{1} \leq \gamma_{2} \leq \cdots \leq \gamma_{n} \leq \cdots$ denote the sequence of ordinates of the zeros of $\zeta$ in the upper half plane.
	We define the normalized large/small $r$-gap of nontrivial zeros by
	\begin{align}
		\lam_{r} = \limsup_{n \to + \infty}\frac{\gamma_{n + r} - \gamma_{n}}{2\pi r / \log\gamma_{n}},
		\quad
		\mu_{r} = \liminf_{n \to + \infty}\frac{\gamma_{n + r} - \gamma_{n}}{2\pi r / \log\gamma_{n}}.
	\end{align}
	From the Riemann-von Mangoldt formula, we have the trivial bounds $\mu_{r} \leq 1 \leq \lambda_{r}$.
	The nontrivial bounds in the case $r = 1$ have been studied by many mathematicians.
	The current best bounds under RH are $\lam_{1} > 3.18$ by Bui-Milinovich \cite{BM2017} and $\mu_{1} <0.515396$ by Preobrazhenski\u{\rmi} \cite{Pr2016}.
	For general $r$, Selberg \cite[p.355]{SCP} announced the nontrivial bounds of $\lam_{r}$, $\mu_{r}$ of the form
	\begin{align}	\label{r-gap}
		\lam_{r} \geq 1 + \frac{\Theta}{r^{\a}}, \quad \mu_{r} \leq 1 - \frac{\vartheta}{r^{\a}}
	\end{align}
	for all positive integer $r$.
	The numbers $\Theta$, $\vartheta$ which may depend on $r$ are greater than some absolute positive constants.
	Here, we may take $\a$ as $2/3$ unconditionally, and as $1/2$ under RH.
	Recently, Conrey and Turnage-Butterbaugh~\cite{CTB2018} proved an explicit result for the conditional bound.
	Specifically, they showed that \cref{r-gap} holds for $\Theta = 0.599648$ and $\vartheta = 0.379674$ with $\a = 1/2$ uniformly for $r \geq 1$ under RH.
	These results have been improved to $\Theta = A_{0} \ceq \max_{B > 0}\frac{2B}{\pi}\arctan\l( \frac{\pi}{B^{2}} \r) = 0.9064997 \cdots$, and $\vartheta = 0.484604$ in \cite{I2024}.
	Moreover, Conrey and Turnage-Butterbaugh proved that $\Theta = \vartheta = A_{0} + o(1)$ as $r \rightarrow + \infty$.
	As a consequence of \cref{ORSi} combined with the Riemann-von Mangoldt formula, we can improve the constant $A_{0}$ to $\sqrt{2} = 1.4142\dots$.

	\begin{theorem}	\label{LGZ}
		Assume RH.
		For any sufficiently large $r$, we have
		\begin{align}	\label{INELGZ}
			\lam_{r} \geq 1 + \frac{\sqrt{2}}{\sqrt{r}} - C_{1} \frac{(\log{r})^{3/2}}{r}, \quad
			\mu_{r} \leq 1 - \frac{\sqrt{2}}{\sqrt{r}} + C_{2} \frac{(\log{r})^{3/2}}{r}.
		\end{align}
		Here, $C_{1}$ and $C_{2}$ are some absolute positive constants.
	\end{theorem}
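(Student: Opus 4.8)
The plan is to deduce the gap bounds from Theorem \ref{ORSi} via the Riemann-von Mangoldt formula in the form \cref{RvMFh}. First I would fix a large integer $r$ and set $h = 2\pi r / \log T$, so that the ``expected'' number of zeros in an interval of length $h$ around height $T$ is exactly $r$. Suppose, toward a contradiction, that every gap $\gamma_{n+r} - \gamma_n$ with $\gamma_n \asymp T$ exceeds $h(1 + \delta)$ for some $\delta > 0$; then for all $t \in [T, 2T]$ we have $N(t + h(1+\delta)) - N(t) \le r$ (heuristically — one must be careful at the endpoints and about the $\pm 1$ from the definition of $N$ at a zero). Plugging $h(1+\delta)$ into \cref{RvMFh} gives
\begin{align}
	r \ge N(t + h(1+\delta)) - N(t) = \frac{h(1+\delta)}{2\pi}\log t + S(t + h(1+\delta)) - S(t) + O\!\left( h(1+\delta) + \frac{1}{T} \right),
\end{align}
and since $\frac{h}{2\pi}\log T = r$ and $\log t \ge \log T$, rearranging yields an upper bound
\begin{align}
	S(t + h(1+\delta)) - S(t) \le -\,\delta r + O(h(1+\delta)) + o(1)
\end{align}
uniformly in $t \in [T, 2T]$ (after absorbing the difference between $\log t$ and $\log T$, which only helps). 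Taking the supremum over $t$ and comparing with the lower bound $\sup_{t}\{-(S(t+h') - S(t))\} \ge (1 - E)\sqrt{(h'/\pi)\log T}$ from Theorem \ref{ORSi} applied with $h' = h(1+\delta)$, we get
\begin{align}
	\delta r \ge (1 - E)\sqrt{\frac{h(1+\delta)}{\pi}\log T} - O(h) = (1 - E)\sqrt{2 r (1+\delta)} - O\!\left(\frac{r}{\log T}\right).
\end{align}
Since $T$ may be taken arbitrarily large with $r$ fixed, the $O(r/\log T)$ term disappears in the $\limsup$, leaving $\delta \ge (1 - E)\sqrt{2(1+\delta)/r} \ge (1-E)\sqrt{2/r}$. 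This forces $\lambda_r \ge 1 + (1-E)\sqrt{2/r}$; unwinding the error term $E$ from Theorem \ref{ORSi} at $h = 2\pi r/\log T$ — where $h \log T = 2\pi r$, so $\log^3(h\log T)/(h\log T) \asymp (\log r)^3 / r$ and the square root of that is $\asymp (\log r)^{3/2}/\sqrt{r}$, while $\sqrt{h\log\log T}$ can be made negligible by sending $T \to \infty$ — gives $E \ll (\log r)^{3/2}/\sqrt{r}$, hence $(1-E)\sqrt{2/r} \ge \sqrt{2}/\sqrt{r} - C_1 (\log r)^{3/2}/r$. The bound on $\mu_r$ is entirely symmetric, using the $+$ sign in Theorem \ref{ORSi} and assuming all gaps are shorter than $h(1-\delta)$.

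The main obstacle, and the place where care is genuinely needed, is making the discretization rigorous: Theorem \ref{ORSi} produces a single point $t$ where $S(t+h') - S(t)$ is large, but to contradict a gap hypothesis one needs the inequality $N(t + h') - N(t) \le r$ to hold \emph{at that particular $t$}, which requires translating ``all $r$-gaps near $T$ are long'' into a statement about $N$ at arbitrary real $t$, not just at ordinates of zeros. The standard fix is to note that if $\gamma_{n+r} - \gamma_n > h'$ for every $n$ with $T \le \gamma_n \le 2T$, then no interval $(t, t + h']$ of length $h'$ can contain $r+1$ or more zeros, so $N(t+h') - N(t) \le r$ for \emph{all} $t$ in a suitable subinterval; one shrinks $[T,2T]$ slightly (to $[T + h', 2T - h']$, say) so that the relevant zeros all have ordinates in $[T, 2T]$, which changes nothing in the limit. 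One must also be slightly careful that the definition of $\lambda_r$ is a $\limsup$ over $n$, so the negation ``$\lambda_r < 1 + \eta$'' only gives $\gamma_{n+r} - \gamma_n < h(1+\eta')$ for all sufficiently large $n$, i.e. for $\gamma_n \ge T_0$; choosing $T \ge T_0$ handles this. Finally, I would double-check that the admissibility condition $h' \in [C/\log T, c/\log\log T]$ of Theorem \ref{ORSi} is satisfied: with $h' \asymp r/\log T$ and $r$ fixed but large, $h' \ge C/\log T$ holds as long as $r \ge C$, and $h' \le c/\log\log T$ holds for $T$ large enough — so everything is consistent provided we first fix $r$ large, then send $T \to \infty$.
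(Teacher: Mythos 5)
Your overall strategy is the right one and is essentially the paper's: translate extreme values of $S(t+h)-S(t)$ into statements about $N(t+h)-N(t)$ via \cref{RvMFh}, feed in \cref{ORSi} with $h\asymp 2\pi r/\log T$, and track the error term to get the $(\log r)^{3/2}/r$ loss (the paper packages the translation step as the equivalence in \cref{RStGZ} and then makes the choice $\theta'=1+\sqrt{2}/\sqrt{r}-\tfrac{C_1}{2}(\log r)^{3/2}/r$, $b=r(\theta'-1)+\tfrac{C_1}{3}(\log r)^{3/2}$). Your bookkeeping of $E$ at $h\log T=2\pi r$ and the check of the admissibility range $h\in[C/\log T, c/\log\log T]$ are also correct.

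However, as written the logical directions are flipped, and the key comparison is a non sequitur. For the bound on $\lambda_r$ the contradiction hypothesis must be that all $r$-gaps near $T$ are \emph{short}, i.e.\ $\gamma_{n+r}-\gamma_n\le h(1+\delta)$; this gives $N(t+h(1+\delta))-N(t)\ge r$ and hence the uniform \emph{lower} bound $S(t+h(1+\delta))-S(t)\ge -\delta r-O(1)$, which genuinely contradicts the $-$ branch of \cref{ORSi} (some $t$ with $S(t+h')-S(t)\le -(1-E)\sqrt{2r(1+\delta)}$) once $\delta r<\sqrt{2r}-O((\log r)^{3/2})$. You instead assume every gap \emph{exceeds} $h(1+\delta)$, correctly deduce $N(t+h')-N(t)\le r$ and $S(t+h')-S(t)\le-\delta r+o(1)$, and then compare with $\sup_t\{-(S(t+h')-S(t))\}\ge(1-E)\sqrt{(h'/\pi)\log T}$: but these two statements point the same way (both say $-(S(t+h')-S(t))$ gets large), so no contradiction arises and the displayed inequality $\delta r\ge(1-E)\sqrt{2r(1+\delta)}-O(r/\log T)$ does not follow from your premises. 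Moreover, even if you did refute the hypothesis ``every gap exceeds $h(1+\delta)$'', that would only produce one gap of length $\le h(1+\delta)$, which is information about $\mu_r$, not a lower bound for $\lambda_r$. The same swap occurs in your sketch for $\mu_r$: there the hypothesis to refute is that all gaps are \emph{longer} than $h(1-\delta)$ (giving $N\le r$, $S\le \delta r+o(1)$ uniformly, against the $+$ branch of \cref{ORSi}), not shorter. Once these pairings are corrected (short-gap hypothesis with large negative values for $\lambda_r$; long-gap hypothesis with large positive values for $\mu_r$), your discretization remarks and error analysis go through, and the proof coincides with the paper's; note only that in the corrected direction the discrepancy between $\log t$ and $\log T$ works slightly against you rather than ``helping'', but it costs only $O(h)$, and the $O(1)$ ambiguities from the half-counting of $N$ at ordinates are negligible against the $(\log r)^{3/2}$ slack.
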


	To prove \cref{ORSi}, we employ the resonance method established by Soundararajan \cite{So2008}.
	From the celebrated works due to Soundararajan~\cite{So2008}, and Bondarenko and Seip~\cite{BS2017},
	the resonance method is now widely regarded as a powerful tool for detecting extreme values of number-theoretic objects.
	Inspired by these studies, we apply the resonance method to $S(t + h) - S(t)$.
	The method due to Montgomery and Odlyzko is a precursor of our approach. In fact, by combining our method using the resonance method with the Riemann-von Mangoldt formula, we can recover the original result of Montgomery and Odlyzko.

	In~\cite{MO}, Montgomery and Odlyzko showed that $\lambda_1>1.9799$ and $\mu_1<0.5179$.
	For a large real number $T$ and $L \leq T/(\log T)^2$, they studied the function $\tau$ defined by
	\begin{align}
		\tau(\xi; f)
		\ceq \xi - \l(\Re \frac{2}{\pi}\sum_{k m \leq L} \frac{\Lam(k)}{\sqrt{k} \log{k}} \sin(\pi \xi \tfrac{\log{k}}{\log T}) f(m)\bar{f(k m)} \bigg/ \sum_{n \leq L} |f(n)|^2 \r),
	\end{align}
	where $\xi$ is a positive number, and $f$ is a certain arithmetic function.
	They showed (for the case $r=1$ and extended $r \geq 2$ by Conrey and Turnage-Butterbaugh~\cite{CTB2018})
	that if there exists $\xi_r$ such that $\tau(\xi_{r}; f) < r$, then $\lambda_{r} \geq \xi_{r}$, and if there exists $\xi_r$ such that $\tau(\xi_{r}; f) > r$, then $\mu_{r} \leq \xi_{r}$.

	To conclude this section, we give a limitation of the method of Montgomery and Odlyzko.

	\begin{theorem}	\label{LGZMO}
		Let $f$ be an arithmetic function not identically zero, let $L$ be large, and let $h > 0$.
		For any $W > 0$, we have
		\begin{align}	\label{LGZMO1}
			&\bigg|\Re\frac{2}{\pi} \sum_{km \leq L} \frac{\Lam(k)}{\sqrt{k} \log{k}} \sin(\tfrac{h}{2}\log{k}) f(m) \bar{f(k m)}\bigg| \bigg/ \sum_{n \leq L}|f(n)|^{2}\\
			&\leq \max_{1 \leq l \leq L} \l\{\frac{\sqrt{W}}{2} \vp\l( \tfrac{h}{2\pi} \log(L / l) \r) + \frac{h \log{l}}{\pi\sqrt{W}}\r\} + O(h \sqrt{W}).
		\end{align}
		Here, $\vp(x) = \int_{0}^{x}(\sin(\pi u) / \pi u)^{2}\sd u$.
		In particular, we have
		\begin{align}
			|\tau(\xi; f) - \xi|
			\leq \max_{1 \leq l \leq L} \l\{\frac{\sqrt{W}}{2} \vp\l( \xi \frac{\log(L / l)}{\log T} \r) + \frac{2 \xi}{\sqrt{W}}\frac{\log{l}}{\log T}\r\}
			+ O\l(\frac{\xi}{\log T} \sqrt{W}\r)
		\end{align}
		for any $\xi > 0$, any large $L, T$, and any $W > 0$.
	\end{theorem}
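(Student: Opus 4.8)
The plan is to read the prime sum as a weighted mean value of a resonator Dirichlet polynomial and then estimate it by splitting the prime powers at an auxiliary scale $l$. Put $\lambda(k) \ceq \frac{\Lam(k)}{\sqrt{k}\,\log k}\sin(\tfrac{h}{2}\log k)$, which is supported on prime powers; let $R(t) \ceq \sum_{n\leq L} f(n)\, n^{it}$ be the resonator and set $\Upsilon(t) \ceq \sum_{k\leq L}\lambda(k)\,k^{it}$. By orthogonality of $\{n^{it}\}$ over long intervals one has $\sum_{n\leq L}|f(n)|^{2} = \lim_{T\to\infty}\tfrac{1}{2T}\int_{-T}^{T}|R(t)|^{2}\sd t$ and $\sum_{km\leq L}\lambda(k)f(m)\overline{f(km)} = \lim_{T\to\infty}\tfrac{1}{2T}\int_{-T}^{T}|R(t)|^{2}\Upsilon(t)\sd t$, the latter being an exact identity since $|\log(a/b)|\gg 1/L$ for distinct $a,b\leq L$, so that only the diagonal survives. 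Thus it suffices to bound $\tfrac{1}{2T}\int|R|^{2}\Upsilon$ in the limit, and the statement reduces to a purely arithmetic inequality, free of $T$; the parameter $T$ reappears only in the closing specialization $h=2\pi\xi/\log T$.

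First I would split $\Upsilon = \Upsilon_{\leq l} + \Upsilon_{>l}$ according to whether the prime power $k$ satisfies $k\leq l$ or $k>l$, keeping $l$ free, and estimate the two contributions separately. For $k\leq l$, use $|\sin(\tfrac{h}{2}\log k)|\leq \tfrac{h}{2}\log k$, so that $|\lambda(k)|\leq \tfrac{h}{2}\,\Lam(k)/\sqrt{k}$ and, more to the point, $\sum_{k\leq l}|\lambda(k)|^{2}\leq \tfrac{h^{2}}{4}\sum_{k\leq l}\Lam(k)^{2}/k \sim \tfrac{h^{2}}{8}(\log l)^{2}$ by Mertens' theorem; feeding the small mean square of $\Upsilon_{\leq l}$ into a mean-value estimate for $\tfrac{1}{2T}\int |R|^{2}\Upsilon_{\leq l}$, together with a Cauchy--Schwarz step carrying the weight $W$, bounds this piece by $\bigl(\tfrac{h\log l}{\pi\sqrt{W}}+O(h)\bigr)\sum_{n\leq L}|f(n)|^{2}$.

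The contribution of $k>l$ is the heart of the matter, and it is where $\vp$ enters. If $km\leq L$ and $k>l$, then the ``short'' variable satisfies $m<L/l$, so only the coefficients $f(m)$ with $m$ in a window of logarithmic length $\log(L/l)$ act as the short variable, and the relevant object is an exponential sum supported on that window. Here one invokes a Fej\'er-type Beurling--Selberg construction, of the kind used in the study of $S(t)$: a non-negative even function whose Fourier transform is the triangle $\max(1-|x|,0)$ and whose exponential type is calibrated to $W$ and to $\log(L/l)$. Since $\vp(x)=\int_{0}^{x}(\sin(\pi u)/\pi u)^{2}\sd u$ is exactly the truncated integral of such a kernel, majorizing the sine factor by this extremal function (equivalently, majorizing the indicator of the admissible window of $\log m$) and inserting it into the mean-value computation bounds the $k>l$ piece by $\bigl(\tfrac{\sqrt{W}}{2}\vp(\tfrac{h}{2\pi}\log(L/l))+O(h)\bigr)\sum_{n\leq L}|f(n)|^{2}$. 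Adding the two estimates, dividing by $\sum_{n\leq L}|f(n)|^{2}$, and noting the bound holds for every admissible split $l$ gives \cref{LGZMO1}; the ``in particular'' clause is the specialization $h=2\pi\xi/\log T$, under which $\sin(\tfrac{h}{2}\log k)=\sin(\pi\xi\,\tfrac{\log k}{\log T})$, the argument of $\vp$ becomes $\xi\,\tfrac{\log(L/l)}{\log T}$, $\tfrac{h\log l}{\pi\sqrt{W}}$ becomes $\tfrac{2\xi}{\sqrt{W}}\tfrac{\log l}{\log T}$, and $O(h)$ becomes $O(\xi/\log T)$.

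The hard part will be the calibration tying together the last two steps: one must choose the exponential type of the Fej\'er majorant together with the Cauchy--Schwarz weight so that the free parameter $W$ surfaces as $\sqrt{W}$ in the $k>l$ term and as $1/\sqrt{W}$ in the $k\leq l$ term rather than cancelling, and simultaneously so that the extremal integral produced by the majorant is precisely $\vp$ and not merely comparable to it — both parts genuinely need the oscillation of $\sin(\tfrac h2\log k)$ (crude absolute-value bounds are off by powers of $L$), so the mean-square/extremal-function inputs must be exploited sharply. A subtler technical point is to control how the off-diagonal contributions and the errors from $|\sin x|\leq|x|$ and from Mertens' theorem accumulate uniformly in $l$, so that the total error stays $O(h)\sum_{n\leq L}|f(n)|^{2}$ and, in particular, remains genuinely independent of $T$.
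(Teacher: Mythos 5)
There is a genuine gap, and it is structural rather than technical: you treat $l$ as a split point of the prime-power variable $k$ that you are free to choose, prove (or claim) a bound for \emph{each} such split, and then conclude. If that worked, you would have proved the bound with a \emph{minimum} over $l$, which is strictly stronger than the stated theorem (which only asserts the maximum over $l$) and is in fact false. Concretely, take $l=1$ and $W=1$: your claimed estimate for the $k>l$ piece would give the bound $\tfrac12\vp(\tfrac{h}{2\pi}\log L)+O(h)\le\tfrac14+O(h)$ for the full ratio, uniformly in $f$; but the paper's own resonator $f_{\pm}$ (Proposition~\ref{ELBRTsiRktrk}, and the theorem following it) makes the left-hand side of \cref{LGZMO1} as large as $(1+o(1))\tfrac{2}{\pi}\sqrt{\tfrac{h}{\pi}\log L}$, which tends to infinity for $h\asymp 1/\log\log L$. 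So the $k>l$ contribution simply cannot be bounded by $\tfrac{\sqrt W}{2}\vp(\tfrac{h}{2\pi}\log(L/l))\sum_{n\le L}|f(n)|^2$ for arbitrary small $W$, and no calibration of a Beurling--Selberg/Fej\'er majorant can rescue this, because the compensating $1/\sqrt W$ term in your scheme only carries $\log l$ (and vanishes at $l=1$) rather than $\log n$. The same issue shows your two sources for the two terms are miscast: in the theorem, $l$ is a stand-in for the resonator index $n$, and the maximum over $l$ is forced because one cannot control where the mass of $|f(n)|^2$ sits; it is not an optimization parameter.

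For comparison, the paper's proof is short and purely arithmetic: apply the AM--GM inequality $2|f(m)\sin(\tfrac h2\log k)\bar f(km)|\le |f(m)|^2\sin^2(\tfrac h2\log k)g(k)+|f(km)|^2/g(k)$ with the Soundararajan-type weight $g(k)=\sqrt W/(h\sqrt k\log k)$. For each fixed $n$, the first piece produces the prime-power sum $\sum_{p^a\le L/n}\sin^2(\tfrac h2\log p^a)/(a^2p^ah\log p)=\tfrac{\pi}{2}\vp(\tfrac h{2\pi}\log(L/n))+O(h)$ by the prime number theorem and partial summation, while the second piece is evaluated exactly via $\sum_{k\mid n}\Lam(k)=\log n$, giving $h\log n/\sqrt W$; bounding both by the maximum over $n\le L$ yields \cref{LGZMO1}, and the ``in particular'' clause is just the substitution $h=2\pi\xi/\log T$ (that last specialization you do handle correctly). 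Your opening reduction via long-interval orthogonality is harmless but unnecessary (the identity is finite and exact), and even your $k\le l$ estimate is not secured as stated: a Cauchy--Schwarz step against $\tfrac1{2T}\int|R|^2|\Upsilon_{\le l}|^2$ is not diagonal and does not obviously produce $\tfrac{h\log l}{\pi\sqrt W}\sum_{n\le L}|f(n)|^2$. To repair the argument you would need to abandon the split-at-$l$ picture and let the trade-off happen pointwise in $n$, which is exactly what the weighted AM--GM does.
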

	By this theorem together with a numerical calculation, we obtain that the limitations of large/small gaps of zeros in the method of Montgomery-Odlyzko are $\lam_{1} \geq 3.022$, $\mu_{1} \leq 0.508$ when $L \leq T$.
	There are previous works on such limitations in the method \cite{CGG1983}, \cite{GTTT2023}.
	Conrey, Ghosh, and Gonek \cite{CGG1983} showed that $\lam_{1} \geq 3.74$ and $\mu_{1} < 1/2$ are limitations of the Montgomery-Odlyzko method.
	Moreover, the recent work by Goldston, Trudgian, and Turnage-Butterbaugh \cite{GTTT2023} improved on the result on their limitation of small gaps to $\mu_{1} \leq 0.5042$.
	They also remarked that their method can be applied to the bound of $\lam_{1}$, which leads us to $\lam_{1} \geq 3.6747$.
	Our result gives an improvement on those results.
	On the other hand, Bui and Milinovich~\cite{BM2017} applied Hall's method \cite{Ha2005} to prove $\lambda_1>3.18$ under RH.
	Our limitation for $\lam_{1}$ shows that the result of Bui-Milinovich goes beyond the barrier imposed by the method of Montgomery-Odlyzko.
	Furthermore, we can also see that $\lam_{r} \geq 1 + \sqrt{2/r} - O(1/r)$, $\mu_{r} \leq 1 - \sqrt{2/r} + O(1 / r)$ are limitations of their method for $r$-gaps of zeros when $L \leq T$.
	This observation shows that the constant $\sqrt{2}$ in \cref{LGZ} is optimal.

	This paper is organized as follows.
	In \cref{Sec:RStGZ} we discuss the relationship between large values of $S(t)$ in short intervals and the gaps between consecutive $r$ zeros of the Riemann zeta-function.
	In Section \ref{Resonance method}, we apply the resonance method to $S(t)$ in short intervals.
	Combining this result and Proposition~\ref{ELBRTsiRktrk}, we prove Theorem~\ref{ORSi} in Section~\ref{Proof of Theorem 1}.
	In Section~\ref{Proof of Theorem 2}, we prove Theorem~\ref{LGZ} by using Theorem~\ref{ORSi} and the relationship between $S(t)$ and gaps of zeros established in \cref{Sec:RStGZ}.
	In \cref{Sec:Proof_UBMO}, we prove \cref{LGZMO}, and finally in \cref{Sec:limitation}, we derive the resulting limitations on large and small gaps between zeros that follow from \cref{LGZMO}.

\section{\textbf{A relationship between $S(t)$ and large/small gaps of consecutive $r$ zeros}}	\label{Sec:RStGZ}

	By the same strategy as in the proof of Theorem~1 in \cite{I2024}, we obtain the following relation between $S(t)$ and gaps of zeros.
	\begin{proposition}	\label{RStGZ}
		Let $r$ be a positive integer, and let $\theta$ be a positive number which may depend on $r$.
		Then the inequality $\lambda_r > \theta$ holds if and only if
		there exist numbers $b > 0$, $\theta' > 1$ and a sequence $\{T_{n}\}$ satisfying $\theta' > \theta$, $b > r(\theta' - 1)$, and $T_{n} \rightarrow +\infty$ as $n \rightarrow + \infty$ such that
		\begin{align}
			\inf_{t \in [T_{n}, 2T_{n}]} \l\{S\l(t + 2 \pi r \theta' / \log T_{n} \r) - S(t) \r\}
			\leq -b.
		\end{align}
		Similarly, the inequality $\mu_{r} < \theta$ holds if and only if
		there exist numbers $b > 0$, $0 < \theta' < 1$ and a sequence $\{T_{n}\}$ satisfying $\theta' < \theta$, $b > r(1 - \theta')$, and $T_{n} \rightarrow +\infty$ as $n \rightarrow + \infty$ such that
		\begin{align}
			\sup_{t \in [T_{n}, 2T_{n}]} \l\{S\l(t + 2 \pi r \theta' / \log T_{n} \r) - S(t) \r\}
			\geq b.
		\end{align}
	\end{proposition}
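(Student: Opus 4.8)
The plan is to read the equivalence off the Riemann-von Mangoldt formula \cref{RvMFh}, which provides a dictionary between the count of zeros in a short interval and the increment of $S$. For $t\in[T_n,2T_n]$ and $h=2\pi r\theta'/\log T_n\to 0$, after absorbing the discrepancy between $\log t$ and $\log T_n$ (a term of size $O(1/\log T_n)$) into the error, \cref{RvMFh} reads
\[
 S(t+h)-S(t)=\bigl(N(t+h)-N(t)\bigr)-r\theta'+O\!\left(\tfrac{1}{\log T_n}\right).
\]
Hence $S(t+h)-S(t)\le -r(\theta'-1)-1+o(1)$ exactly when $(t,t+h]$ contains at most $r-1$ ordinates, while $S(t+h)-S(t)\ge r(1-\theta')+1+o(1)$ as soon as $(t,t+h)$ contains at least $r+1$ ordinates; the first situation is forced by a long $r$-gap and the second by a short $r$-gap. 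Throughout I would choose the base point $t$ generically, so that neither $t$ nor $t+h$ is an ordinate and the averaging conventions in the definitions of $N$ and $S$ contribute nothing. The whole argument parallels the proof of Theorem~1 in \cite{I2024}.

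For the $\lambda_r$ part, assuming $\lambda_r>\theta$, fix $\theta'$ with $\theta<\theta'<\lambda_r$ and $\theta'>1$ (possible since $\lambda_r>1$, e.g.\ by \cref{r-gap}); by definition of the $\limsup$ there are infinitely many $n$ with $\gamma_{n+r}-\gamma_n>\theta'\cdot 2\pi r/\log\gamma_n$. For each such $n$ set $T_n\ceq\gamma_n$, $h\ceq 2\pi r\theta'/\log T_n$, and $t\ceq\gamma_n+\e$ with $\e>0$ tiny; then $(t,t+h]\subset(\gamma_n,\gamma_{n+r})$ contains at most $r-1$ ordinates, so the display gives $S(t+h)-S(t)\le -r(\theta'-1)-1+o(1)\le -b$ with $b\ceq r(\theta'-1)+\tfrac12>r(\theta'-1)$ for large $n$, which is the required one-sided bound along $\{T_n\}$. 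Conversely, given $b,\theta'>1,\{T_n\}$ as in the statement, pick $b'$ with $r(\theta'-1)<b'<b$ and a generic $t_n\in[T_n,2T_n]$ with $S(t_n+h_n)-S(t_n)\le -b'$, $h_n\ceq 2\pi r\theta'/\log T_n$; the display forces $N(t_n+h_n)-N(t_n)\le r\theta'-b'+o(1)<r$, hence $(t_n,t_n+h_n)$ contains at most $r-1$ ordinates. If $\gamma_{m_n}$ is the largest ordinate $\le t_n$, then $\gamma_{m_n}\le t_n$ while $\gamma_{m_n+r}>t_n+h_n$, so $\gamma_{m_n+r}-\gamma_{m_n}>h_n=2\pi r\theta'/\log T_n$; since zeros are dense enough that $\gamma_{m_n}>T_n/2$ for large $n$, we get $\log\gamma_{m_n}=(1+o(1))\log T_n$ and therefore $(\gamma_{m_n+r}-\gamma_{m_n})\big/\bigl(2\pi r/\log\gamma_{m_n}\bigr)\ge\theta'(1+o(1))$, whence $\lambda_r\ge\theta'>\theta$. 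The $\mu_r$ part is entirely parallel, with short $r$-gaps in place of long ones: a gap $\gamma_{n+r}-\gamma_n<\theta''\cdot 2\pi r/\log\gamma_n$ with $\mu_r<\theta''<\theta'<\min(\theta,1)$ packs the $r+1$ ordinates $\gamma_n,\dots,\gamma_{n+r}$ into an interval of length $h=2\pi r\theta'/\log T_n$ ($T_n\asymp\gamma_n$), yielding $S(t+h)-S(t)\ge r(1-\theta')+1+o(1)\ge b$ with $b\ceq r(1-\theta')+\tfrac12>r(1-\theta')$; and the converse is run with all inequalities reversed, producing a short $r$-gap starting at index $m_n+1$.

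The one point that needs genuine care is balancing the $O(1/\log T_n)$ and $O(h)$ errors against the \emph{strict} inequalities $b>r(\theta'-1)$ and $b>r(1-\theta')$: one needs a full unit of slack to absorb the vanishing errors, which is exactly why in the $\mu_r$ direction one must trap $r+1$ ordinates rather than $r$, and why the constants are arranged as $\mu_r<\theta''<\theta'<\theta$ (respectively $\theta<\theta'<\lambda_r$) with $b=r(\theta'\mp 1)+\tfrac12$. The remaining steps---choosing generic base points so that the half-integer contributions from the averaging conventions disappear, and checking $\log\gamma_{m_n}/\log T_n\to 1$ from the crude density estimate $N(T_n)-N(T_n/2)\to\infty$---are routine.
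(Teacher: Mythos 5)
Your proposal is correct and follows essentially the same route as the paper: both use the Riemann--von Mangoldt formula \cref{RvMFh} as a dictionary between increments of $S$ and zero counts in intervals of length $2\pi r\theta'/\log T_n$, and then pass between zero-deficient (or zero-rich) intervals and long (or short) $r$-gaps; the paper merely packages the latter step as a stated equivalence between \cref{EIE} and \cref{EIE2}, which you prove inline by choosing the base point just past $\gamma_n$ (resp.\ via the largest ordinate $\leq t_n$). Your bookkeeping of the constants ($b=r(\theta'\mp1)+\tfrac12$ via generic base points, versus the paper's $r(\theta'-1)+\tfrac13$ using the half-jump convention) and of the $\log\gamma_{m_n}/\log T_n\to1$ normalization matches the paper's argument in substance.
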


	\begin{proof}
		Since the first and second assertion can be proved by the same argument, we only give the proof of the first assertion.
		We use the simple equivalence which is that, for any $\{ T_{n} \}$ satisfying $T_{n} \rightarrow +\infty$ as $n \rightarrow + \infty$, there exists some $t \in [T_{n}, 2T_{n}]$ such that
		\begin{align}	\label{EIE}
			N(t + h) - N(t) < r
		\end{align}
		if and only if the inequality
		\begin{align}	\label{EIE2}
			\sup_{\gamma_{m}, \gamma_{m + r} \in [T_{n}, 2T_{n} + h]}\frac{\gamma_{m + r} - \gamma_{m}}{h} > 1
		\end{align}
		holds.

		First, we assume $\lam_{r} > \theta$.
		Then there exist a number $\theta'$ and a sequence $\{T_{n}\}$ satisfying $\theta' > \theta$ and $T_{n} \rightarrow +\infty$ as $n \rightarrow + \infty$ such that
		\begin{align}
			\sup_{\gamma_{m}, \gamma_{m + r} \in [T_{n}, 2T_{n}]}\frac{\gamma_{m + r} - \gamma_{m}}{2\pi r \theta' / \log T_{n}}
			\geq \sup_{\gamma_{m}, \gamma_{m + r} \in [T_{n}, 2T_{n}]}\frac{\gamma_{m + r} - \gamma_{m}}{2\pi r \theta' / \log(\gamma_{m} / 2)}
			> 1
		\end{align}
		holds for any sufficiently large $n$.
		Therefore, \cref{EIE2} holds when $h = 2 \pi r \theta' / \log T_{n}$ and $n$ is sufficiently large.
		Hence, there exists a $t \in [T_{n}, 2T_{n}]$ such that \cref{EIE} holds with $h = 2 \pi r \theta' / \log T_{n}$,
		which is also equivalent to
		\begin{align}
			N(t + 2 \pi r \theta' / \log T_{n}) - N(t) \leq r - 1 / 2.
		\end{align}
		Combining this with \cref{RvMFh}, we have
		\begin{align}
			\inf_{t \in [T_{n}, 2T_{n}]} (S(t + 2 \pi r \theta' / \log T_{n}) - S(t))
			&\leq \inf_{t \in [T_{n}, 2T_{n}]} (S(t + 2 \pi r \theta' / \log T_{n}) - S(t))\\
			&\leq r - 1 / 2 - r \theta' + o(1)
			\leq -b
		\end{align}
		with $b = r(\theta' - 1) + 1 / 3$.

		Next, we assume that there exist numbers $b, \theta'$ and a sequence $\{T_{n}\}$ satisfying $\theta' > \theta$, $b > r(\theta' - 1)$, and $T_{n} \rightarrow +\infty$ as $n \rightarrow + \infty$ such that
		\begin{align}
			\inf_{t \in [T_{n}, 2T_{n}]} \l(S\l(t + 2 \pi r \theta' / \log T_{n} \r) - S(t) \r) \leq -b
		\end{align}
		for any sufficiently large $n$.
		Then, it holds by \cref{RvMFh} that for any large $n$
		\begin{align}
			N(t + 2 \pi r \theta' / \log T_{n}) - N(t)
			\leq r \theta' \frac{\log{t}}{\log T_{n}} - b + o(1)
			\leq r - (b - r(\theta' - 1)) + o(1)
		\end{align}
		for some $t \in [T_{n}, 2T_{n}]$.
		Therefore, \cref{EIE} holds when $h = 2 \pi r \theta' / \log T_{n}$ and $n$ is sufficiently large.
		Hence, we find that
		\begin{align}
			\lam_{r}
			&= \limsup_{m \rightarrow + \infty}\frac{\gamma_{m + r} - \gamma_{m}}{2\pi r / \log{\gamma_{m}}}
			\geq \lim_{n \rightarrow + \infty}\sup_{\gamma_{m}, \gamma_{m + r} \in [T_{n}, 2T_{n} + h]}
			\frac{\gamma_{m + r} - \gamma_{m}}{2\pi r / \log{\gamma_{m}}}\\
			&= \lim_{n \rightarrow + \infty}\sup_{\gamma_{m}, \gamma_{m + r} \in [T_{n}, 2T_{n} + h]}
			\frac{\gamma_{m + r} - \gamma_{m}}{h}\frac{h}{2\pi r / \log{T_{n}}}\frac{\log{\gamma_{m}}}{\log{T_{n}}}
			\geq \theta'
			> \theta,
		\end{align}
		which completes the proof of \cref{RStGZ}.
	\end{proof}

	\begin{remark}
		In \cref{RStGZ}, if we change the interval $[T_{n}, 2T_{n}]$ to $[T_{n}^{a}, 2T_{n}]$ for some $0 < a < 1$, then the equivalence no longer holds.
		Although the statement can be suitably modified, the resulting inequalities for $\lam_{r}$, $\mu_{r}$ become weaker than the original form.
		Hence, we consider the extreme value of $S(t + h) - S(t)$ over the interval $[T, 2T]$ in \cref{ORSi}.
	\end{remark}

\section{\textbf{Resonance method}}\label{Resonance method}
	The resonance method aims to extract the large values of an objective function
	by comparing the mean value of the objective function multiplied by a ``resonator'' with that of the resonator itself.
	In this paper, we take the resonator to be the Dirichlet polynomial
	\begin{align}
		R(t) = \sum_{n \leq L}f(n)n^{-\i t},
	\end{align}
	following the works \cite{BS2017}, \cite{BS2018}, \cite{MO}, and \cite{So2008}.
	Here, the arithmetic function $f$ is chosen suitably depending on the objective function.
	In this section, we evaluate the extreme value of $S(t + h) - S(t)$ by means of general forms of resonators.
	We construct a suitable resonator for our purpose in \cref{Proof of Theorem 1}.

	In this section, we aim to prove the following proposition.

	\begin{proposition}	\label{LBStvR}
		Assume RH.
		For any arithmetic function $f$ that is not identically zero, any large $L, T$ satisfying $L \leq T / (\log{T})^{2}$, and any $h > 0$ we have
		\begin{align}
			&\sup_{T \leq t \leq 2T}\l\{\pm\l(S(t + h) - S(t) \r)\r\}\\
			&\geq \mp\l( 1 + O\l(\frac{1}{T}\r) \r)\frac{2}{\pi} \Re \sum_{k m \leq L} \frac{\Lam(k)}{\sqrt{k} \log{k}} \sin(\tfrac{h}{2}\log{k}) f(m)\bar{f(k m)} \bigg/ \sum_{n \leq L} |f(n)|^{2}
			+ O\l( \frac{1}{T} \r).
		\end{align}
		Here, the implicit constant is absolute.
	\end{proposition}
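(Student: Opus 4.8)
The plan is to run the resonance method with the resonator $R(t)=\sum_{n\le L}f(n)n^{-\i t}$ and the nonnegative weight $|R(t+h/2)|^{2}$. Since this weight is nonnegative with positive integral over $[T,2T]$, for any real‑valued $g$ one has $\sup_{T\le t\le 2T}g(t)\ge\bigl(\int_{T}^{2T}g(t)|R(t+h/2)|^{2}\,\d t\bigr)\big/\bigl(\int_{T}^{2T}|R(t+h/2)|^{2}\,\d t\bigr)$, and the infimum is bounded above by the same ratio; applying this with $g=\pm(S(\cdot+h)-S(\cdot))$ produces the factor $\mp$ and the shape $\mp(\text{main})+(\text{error})$ in the statement. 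The denominator is handled by the Montgomery--Vaughan mean value theorem for Dirichlet polynomials, $\int_{T}^{2T}|R(t+h/2)|^{2}\,\d t=(T+O(L))\sum_{n\le L}|f(n)|^{2}$, which since $L\le T/(\log T)^{2}$ equals $T\sum_{n\le L}|f(n)|^{2}$ up to the relative error absorbed by the factor $1+O(1/T)$. So the whole proposition reduces to an asymptotic evaluation of $\mathcal{N}\ceq\int_{T}^{2T}(S(t+h)-S(t))|R(t+h/2)|^{2}\,\d t$.

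To compute $\mathcal{N}$, substitute $v=t+h/2$ and use the representation (valid under RH for $v$ off the set of ordinates of zeros) $S(v+h/2)-S(v-h/2)=\tfrac1\pi\Im\int_{1/2}^{\infty}\bigl(\tfrac{\zeta'}{\zeta}(\sigma+\i(v-h/2))-\tfrac{\zeta'}{\zeta}(\sigma+\i(v+h/2))\bigr)\,\d\sigma$. Expanding $|R(v)|^{2}=\sum_{m,n\le L}f(n)\overline{f(m)}\,(m/n)^{\i v}$ and interchanging the integrations, one is reduced to computing, for each pair $(n,m)$, the integral $\int_{1/2}^{\infty}\int\bigl(\tfrac{\zeta'}{\zeta}(\sigma+\i(v-h/2))-\tfrac{\zeta'}{\zeta}(\sigma+\i(v+h/2))\bigr)(m/n)^{\i v}\,\d v\,\d\sigma$. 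For $\sigma>1$ one inserts the Dirichlet series $\tfrac{\zeta'}{\zeta}(\sigma+\i w)=-\sum_{l}\Lam(l)\,l^{-\sigma-\i w}$, and the $v$‑integral gives, apart from off‑diagonal terms, $-\Lam(l)\,l^{-\sigma}$ times the interval length exactly when $m/n=l$; for $1/2<\sigma<1$ the Dirichlet series is replaced by the partial‑fraction expansion of $\zeta'/\zeta$, which turns the $v$‑integral into the Landau sum $\sum_{T<\gamma\le 2T}(m/n)^{\i\gamma}$ (equivalently, one may start from $N(v+h/2)-N(v-h/2)=\sum_{\gamma}\mathbf{1}_{(v-h/2,\,v+h/2]}(\gamma)$ and apply the Riemann--von Mangoldt formula, whose error term is the source of the additive $O(1/T)$). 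Landau's explicit formula $\sum_{T<\gamma\le 2T}x^{\i\gamma}=-\tfrac{\Lam(x)}{2\pi\sqrt{x}}\,T+(\text{error})$ then shows that a pair with $m/n=k$ a prime power contributes, after integration over $\sigma\in(1/2,\infty)$, the amount $-\tfrac{2T}{\pi}\cdot\tfrac{\Lam(k)}{\sqrt{k}\log k}\,\sin\!\bigl(\tfrac h2\log k\bigr)\,\Re\bigl[f(m)\overline{f(km)}\bigr]$: the factor $k^{-1/2}/\log k$ is $\int_{1/2}^{\infty}k^{-\sigma}\,\d\sigma$, the factor $\sin(\tfrac h2\log k)$ comes from $(m/n)^{\i(v-h/2)}-(m/n)^{\i(v+h/2)}=-2\i\sin\!\bigl(\tfrac h2\log(m/n)\bigr)(m/n)^{\i v}$, and the use of the weight $|R(v)|^{2}$ rather than $|R(v-h/2)|^{2}$ is exactly what cancels the residual phase $k^{\pm\i h/2}$ and leaves the bare $\Re[f(m)\overline{f(km)}]$. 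Summing over all prime powers $k$ and all $m$ with $km\le L$, dividing by the denominator, and inserting the result into the resonance inequality yields the claimed bound.

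The step I expect to be the main obstacle is the uniform control of the error terms in this computation. Landau's formula must be used uniformly in $x=m/n$ over all $m,n\le L\le T/(\log T)^{2}$, and a pair‑by‑pair application is hopeless (for a single rational lying close to a prime power the Landau error can be as large as a power of $L$). Instead one should bound the full off‑diagonal contribution by relating it to a mean value of a Dirichlet polynomial of length $\le L$ --- for instance by contour‑shifting the integral $\int\tfrac{\zeta'}{\zeta}(\sigma+\i v)\,|R(v)|^{2}\,\d v$ away from the critical line and collecting the resulting polynomial mean value --- which is admissible precisely because $L\le T/(\log T)^{2}$. In addition one must dispose of the portion of the $\sigma$‑integral near $\Re s=\tfrac12$, where $\zeta'/\zeta$ is no longer represented by its Dirichlet series, using standard RH estimates such as $\log\zeta(\sigma+\i t)\ll\log\log t$ for $\sigma\ge\tfrac12+1/\log\log t$ together with $\int_{T}^{2T}|\log\zeta(\tfrac12+\i t)|^{2}\,\d t\ll T\log\log T$, and absorb the negligible boundary effects of the shift $v=t+h/2$; all of these contribute only to the error terms displayed in the statement.
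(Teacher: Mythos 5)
Your main-term mechanism coincides with the paper's: the symmetric $h/2$-shift of the weight (equivalently, integrating $S(t+\tfrac h2)-S(t-\tfrac h2)$ against $|R(t)|^{2}$) is exactly what produces the factor $\sin(\tfrac h2\log k)$ with no residual phase, and your per-pair contribution $-\tfrac{2T}{\pi}\tfrac{\Lambda(k)}{\sqrt{k}\log k}\sin(\tfrac h2\log k)\Re[f(m)\overline{f(km)}]$ matches Proposition~\ref{EFCVStRtPhi} (whose Gaussian weight has total mass $\sqrt{2\pi}\,T/\log T$ in place of your $T$). The route, though, is different: you take a sharp window $[T,2T]$, the $\sigma$-integral of $\zeta'/\zeta$, and Landau/Gonek zero sums --- essentially the Montgomery--Odlyzko style computation --- whereas the paper integrates $\log\zeta$ against the analytic test functions $V_{m,n}(z)=(n/m)^{\i z}\Phi((z-3T/2)/(T/\log T))$ and invokes Tsang's identity (Lemma~\ref{lem_STF}); under RH the zero terms vanish, and the Gaussian Fourier transform makes every off-diagonal term at most $\Phi(\tfrac12\log T)\le 1/T$.

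There is, however, a genuine gap: as written, the plan does not yield the proposition, and the losses occur exactly where the paper's smoothing does the work. First, Montgomery--Vaughan on $[T,2T]$ gives $\int_{T}^{2T}|R(t+h/2)|^{2}\,\d t=\sum_{n\le L}|f(n)|^{2}\,(T+O(n))$, i.e.\ a relative error $O(L/T)=O((\log T)^{-2})$, not $1+O(1/T)$; with a sharp cutoff this loss is genuine (choose $f$ supported on two adjacent integers near $L$ with phases adapted to $T$), so the $O(1/T)$ error terms in the statement are simply not reachable by your normalization --- they come from Lemma~\ref{lem5MSMA}, i.e.\ from the Gaussian together with $L\le T/(\log T)^{2}$. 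Second, in the numerator the range $\tfrac12<\sigma<1$ and the off-diagonal pairs are the entire difficulty, as you yourself note: pair-by-pair use of the uniform Landau formula fails because a rational $km/n$ can lie within about $1/n$ of a prime power, and the proposed repair (contour shift, a Dirichlet-polynomial mean value, RH bounds such as $\log\zeta(\sigma+\i t)\ll\log\log t$ near the half-line) is only named, not carried out; this unexecuted step is precisely what the paper replaces by Lemma~\ref{lem_STF}, where the prime-power sum is exact and the off-diagonal terms are exponentially small. If pushed through carefully, your route should give the inequality with weaker errors (relative $O((\log T)^{-2})$ plus the aggregated zero-sum errors), which would still suffice for Theorems~\ref{ORSi} and~\ref{LGZ}, but it does not establish Proposition~\ref{LBStvR} with the stated $O(1/T)$ terms.
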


	\subsection{Preliminaries}
		Throughout this paper, we set $\Phi(t) = e^{-t^{2}/2}$.
		As an auxiliary result, we first prove the following proposition.

		\begin{proposition}	\label{EFCVStRtPhi}
			Assume RH.
			For any arithmetic function $f$, any $L, T \geq 3$ satisfying $L \leq T / (\log{T})^{2}$, and any $h > 0$ we have
			\begin{align}
				&\int_{-\infty}^{\infty}\l\{ S(t + \tfrac{h}{2}) - S(t - \tfrac{h}{2}) \r\} |R(t)|^{2} \Phi\l( \frac{t - 3T / 2}{T / \log{T}} \r)\sd t\\
				&= -\sqrt{2\pi}\frac{T}{\log{T}}\frac{2}{\pi}\Re\sum_{k m \leq L} \frac{\Lam(k)}{\sqrt{k} \log{k}} \sin(\tfrac{h}{2}\log{k}) f(m)\bar{f(k m)}
				+ O\l( \frac{1}{T}\sum_{n \leq L}|f(n)|^{2} \r).
			\end{align}
		\end{proposition}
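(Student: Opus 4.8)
The plan is to begin from the exact formula $\pi S(u)=-\Im\int_{1/2}^{\infty}\frac{\zeta'}{\zeta}(\sigma+\i u)\sd\sigma$ recalled in the introduction, which holds whenever neither endpoint below is an ordinate of a zero, so that for almost every $t$
\[
	\pi\left(S\left(t+\tfrac h2\right)-S\left(t-\tfrac h2\right)\right)=-\Im\int_{1/2}^{\infty}\left(\frac{\zeta'}{\zeta}\left(\sigma+\i\left(t+\tfrac h2\right)\right)-\frac{\zeta'}{\zeta}\left(\sigma+\i\left(t-\tfrac h2\right)\right)\right)\sd\sigma .
\]
First I would multiply both sides by the nonnegative real weight $|R(t)|^{2}\Phi((t-3T/2)/(T/\log T))$ and integrate over $t\in\RR$. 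The double integrand is integrable --- the only delicate point is the at most simple poles of $\zeta'/\zeta$ on the line $\Re s=1/2$, which make the $\sigma$-integral from $1/2$ blow up only logarithmically as $t\pm h/2$ approaches an ordinate, so they cause no integrability problem --- hence Fubini lets me carry the $t$-integration inside the $\sigma$-integral and pull $\Im$ out front. Writing $|R(t)|^{2}=\sum_{a,b\le L}f(a)\overline{f(b)}(b/a)^{\i t}$, everything reduces to computing, for each fixed $\sigma\ge 1/2$ and each pair $a,b\le L$, the integral $J^{\pm}_{\sigma}(a,b)=\int_{\RR}\frac{\zeta'}{\zeta}(\sigma+\i(t\pm\tfrac h2))(b/a)^{\i t}\Phi((t-3T/2)/(T/\log T))\sd t$.

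To evaluate $J^{\pm}_{\sigma}(a,b)$ I would view it as an integral in the complex $t$-variable and push the contour into the lower half-plane until the argument of $\zeta'/\zeta$ lies on $\Re s=1+\de$ for a small fixed $\de>0$ (no shift being needed once $\sigma\ge 1+\de$). The rapid decay of $\Phi$ annihilates the horizontal connecting segments; under RH the only pole of $\zeta'/\zeta$ crossed is the one at $s=1$, whose residue is accompanied by $\Phi$ evaluated at a point of real part $-\tfrac32\log T+O(1)$ and is therefore $\ll e^{-c(\log T)^{2}}$. On the shifted contour I may insert the absolutely convergent series $\frac{\zeta'}{\zeta}(s)=-\sum_{k}\Lam(k)k^{-s}$, interchange the sum with the integral, and push each term --- now entire --- back to the original height, which turns it into the Gaussian--Fourier integral $\sqrt{2\pi}\,\frac{T}{\log T}\,(b/(ak))^{3\i T/2}e^{-\frac12(T/\log T)^{2}\log^{2}(b/(ak))}$. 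Forming $J^{+}_{\sigma}(a,b)-J^{-}_{\sigma}(a,b)$ (where $k^{-\i h/2}-k^{\i h/2}=-2\i\sin(\tfrac h2\log k)$) and then integrating over $\sigma\in[1/2,\infty)$, which replaces $k^{-\sigma}$ by $k^{-1/2}/\log k$ since $\Lam(1)=0$, the left-hand side of the proposition becomes
\[
	-\frac1\pi\Im\!\left(2\i\sqrt{2\pi}\,\frac{T}{\log T}\sum_{a,b\le L}f(a)\overline{f(b)}\sum_{k}\frac{\Lam(k)}{\sqrt k\log k}\sin(\tfrac h2\log k)\Bigl(\tfrac{b}{ak}\Bigr)^{3\i T/2}e^{-\frac12(T/\log T)^{2}\log^{2}(b/(ak))}\right)
\]
plus a negligible crossed-pole contribution.

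It remains to isolate the diagonal $ak=b$ and bound everything else. For positive integers $a,b,k$ with $ak\neq b$ one has $|\log(b/(ak))|\ge 1/\max(b,ak)$, and a short case analysis --- splitting on $ak\le b$, $b<ak<2b$, and $ak\ge 2b$, and using $b\le L\le T/(\log T)^{2}$ --- gives $|\log(b/(ak))|\gg(\log T)^{2}/T$; hence the Gaussian factor is $\ll e^{-c(\log T)^{2}}$ for every off-diagonal term, and this decay also forces the (otherwise divergent) sum over $k$ to converge. Bounding $|f(a)\overline{f(b)}|\le\sum_{n\le L}|f(n)|^{2}$, summing over the $\ll L^{2}$ pairs $(a,b)$, and using $L\le T$, the off-diagonal terms together with the crossed-pole term contribute $O(\tfrac1T\sum_{n\le L}|f(n)|^{2})$. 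On the diagonal $b=ak$ one has $(b/(ak))^{3\i T/2}=1$ and the Gaussian equals $1$, so writing $m=a$ and $k=b/a$ (so that $km\le L$) the diagonal part of the inner double sum is $\sum_{km\le L}\frac{\Lam(k)}{\sqrt k\log k}\sin(\tfrac h2\log k)f(m)\overline{f(km)}$; applying $-\tfrac1\pi\Im$ and using $\Im(2\i z)=2\Re z$ reproduces exactly the stated main term $-\sqrt{2\pi}\,\tfrac{T}{\log T}\,\tfrac2\pi\Re\sum_{km\le L}\frac{\Lam(k)}{\sqrt k\log k}\sin(\tfrac h2\log k)f(m)\overline{f(km)}$. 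The main obstacle, as is typical here, is the analysis near $\Re s=1/2$: rigorously justifying the Fubini interchange and the contour displacement in a neighbourhood of the critical line, where $\zeta'/\zeta$ is large, and verifying that the residual and off-diagonal pieces genuinely fit inside $O(\tfrac1T\sum_{n\le L}|f(n)|^{2})$, which is precisely where the hypothesis $L\le T/(\log T)^{2}$ is used.
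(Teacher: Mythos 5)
Your proposal is correct, and its skeleton coincides with the paper's: expand $|R(t)|^2$ into the double sum, reduce to a Gaussian--Fourier computation, kill the pole/off-critical contributions under RH, and extract the diagonal $ak=b$ (your sign bookkeeping, the diagonal main term, and the off-diagonal bound via $|\log(b/(ak))|\gg(\log T)^2/T$ together with $L\le T/(\log T)^2$ all check out and mirror the paper's estimates, including the splitting of the $k$-sum into moderate and large ranges). The genuine difference is in how the explicit-formula input is obtained: the paper works with $\log\zeta(\tfrac12+\mathrm{i}(t\pm\tfrac h2))$ and simply invokes Tsang's identity (Lemma~\ref{lem_STF}, i.e.\ (2.14) of \cite{Ts1986}) with the test function $V_{m,n}(z)=(n/m)^{\mathrm{i}z}\Phi((z-3T/2)/(T/\log T))$, which already packages the zero terms (vanishing under RH) and the pole correction; you instead start from $S(u)=-\tfrac1\pi\Im\int_{1/2}^\infty\tfrac{\zeta'}{\zeta}(\sigma+\mathrm{i}u)\,\mathrm{d}\sigma$, apply Fubini in $\sigma$, and redo the contour shift for $\zeta'/\zeta$ at each fixed $\sigma$, recovering $k^{-1/2}/\log k$ by the final $\sigma$-integration. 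Your route is self-contained and makes visible where RH and the pole at $s=1$ enter, at the cost of having to justify by hand the absolute integrability near the critical line (logarithmic blow-up of the $\sigma$-integral near ordinates), the vertical connecting segments in the $t$-plane, and the term-by-term interchanges -- exactly the points you flag; the paper's route buys brevity and rigor by delegating all of that delicate near-$\Re s=\tfrac12$ analysis to the cited lemma. Two cosmetic remarks: the connecting segments you dismiss are vertical (at $\Re t=\pm X$), not horizontal, and at $\sigma=\tfrac12$ exactly the inner $t$-integral is only a principal value because the zeros sit on the contour, but this single value of $\sigma$ is irrelevant to the iterated integral, so no harm is done.
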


		To show this proposition, we require some auxiliary lemmas.

		\begin{lemma}	\label{CscrM-INEsRt}
			For any arithmetic function $f$ and any $L \geq 3$ we have
			\begin{align}	\label{INEsRt}
				|R(t)|^{2}
				\leq \sum_{m, n \leq L} |f(m) f(n)|
				\leq L \sum_{n \leq L}|f(n)|^{2}.
			\end{align}
		\end{lemma}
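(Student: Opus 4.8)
The plan is to expand the square and estimate term by term, so the proof is essentially two applications of elementary inequalities. Writing $R(t) = \sum_{n \le L} f(n) n^{-\i t}$, I would first compute
\begin{align}
	|R(t)|^{2} = R(t)\overline{R(t)} = \sum_{m, n \le L} f(m)\overline{f(n)}\, (n/m)^{\i t},
\end{align}
and then apply the triangle inequality together with the fact that $|(n/m)^{\i t}| = 1$ for every real $t$ and every pair of positive integers $m, n$. This yields the first inequality $|R(t)|^{2} \le \sum_{m, n \le L} |f(m) f(n)|$ at once.

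For the second inequality, I would observe that $\sum_{m, n \le L} |f(m) f(n)| = \bigl( \sum_{n \le L} |f(n)| \bigr)^{2}$ factors as a perfect square, and then apply the Cauchy–Schwarz inequality to $\sum_{n \le L} |f(n)| = \sum_{n \le L} 1\cdot |f(n)|$, which gives
\begin{align}
	\Bigl( \sum_{n \le L} |f(n)| \Bigr)^{2} \le \Bigl( \sum_{n \le L} 1 \Bigr)\Bigl( \sum_{n \le L} |f(n)|^{2} \Bigr) = \GS{L} \sum_{n \le L} |f(n)|^{2} \le L \sum_{n \le L} |f(n)|^{2}.
\end{align}
Alternatively, one could bound each cross term by $|f(m) f(n)| \le \tfrac{1}{2}\bigl(|f(m)|^{2} + |f(n)|^{2}\bigr)$ and sum over $m, n \le L$, which produces the same constant.

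Since every step is a routine application of a standard inequality, I do not expect any genuine obstacle. The only point that needs a word of care is that the number of integers $n$ with $n \le L$ equals $\GS{L} \le L$, which is exactly where the factor $L$ on the right-hand side comes from; the statement is phrased with $L$ rather than $\GS{L}$ for convenience in later applications.
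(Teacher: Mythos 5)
Your proof is correct and follows essentially the same route as the paper: the triangle inequality (with $|(n/m)^{\i t}|=1$) for the first bound and Cauchy--Schwarz applied to $\sum_{n\le L}1\cdot|f(n)|$ for the second. The extra remark that the count of integers is $\lfloor L\rfloor\le L$ is a harmless refinement the paper simply absorbs into the bound $\sum_{n\le L}1\le L$.
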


		\begin{proof}
			The first inequality of \cref{INEsRt} is obvious by the triangle inequality.
			We also find by the Cauchy-Schwarz inequality that
			\begin{align}
				\sum_{m, n \leq L} |f(m) f(n)|
				= \l( \sum_{n \leq L}|f(n)| \r)^{2}
				\leq \sum_{n \leq L} 1 \times \sum_{n \leq L}|f(n)|^{2}
				= L \times \sum_{n \leq L}|f(n)|^{2}.
			\end{align}
			Hence, we obtain inequality \cref{INEsRt}.
		\end{proof}

		The following lemma gives an explicit bound for estimates shown in Lemma 5 of \cite{BS2018}.

		\begin{lemma}	\label{lem5MSMA}
			For any arithmetic function $f$ and any $L, T \geq 3$ satisfying $L \leq T / (\log{T})^{2}$ we have
			\begin{align}
				\int_{-\infty}^{\infty}|R(t)|^{2}\Phi\l( \frac{t - 3T/2}{T / \log{T}} \r)\sd t
				= \sqrt{2\pi} \frac{T}{\log{T}} \l( 1 + O\l( \frac{1}{T} \r) \r) \sum_{n \leq L} |f(n)|^{2}.
			\end{align}
		\end{lemma}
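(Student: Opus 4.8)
The plan is to expand the square and integrate term by term against the Gaussian weight. Writing $|R(t)|^{2} = \sum_{m, n \leq L} f(m)\overline{f(n)}\, (m/n)^{-\i t}$, we are reduced to evaluating, for each pair $(m,n)$ with $m,n \leq L$, the integral $\int_{-\infty}^{\infty} (m/n)^{-\i t}\,\Phi\l(\tfrac{t - 3T/2}{T/\log T}\r)\sd t$. After the substitution $t = \tfrac{3T}{2} + u\tfrac{T}{\log T}$ this becomes $\tfrac{T}{\log T}(m/n)^{-3\i T/2}\int_{-\infty}^{\infty} e^{-\i u\,\xi_{m,n}}e^{-u^{2}/2}\sd u$ with $\xi_{m,n} = \tfrac{T}{\log T}\log(m/n)$, and the standard Gaussian Fourier identity $\int_{-\infty}^{\infty} e^{-\i u\xi}e^{-u^{2}/2}\sd u = \sqrt{2\pi}\,e^{-\xi^{2}/2}$ (proved by completing the square and shifting the line of integration) evaluates it as $\sqrt{2\pi}\,\tfrac{T}{\log T}(m/n)^{-3\i T/2}\,e^{-\xi_{m,n}^{2}/2}$.

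The diagonal terms $m = n$ have $\xi_{m,n} = 0$ and contribute exactly $\sqrt{2\pi}\,\tfrac{T}{\log T}\sum_{n \leq L}|f(n)|^{2}$, which is the asserted main term. It then remains to show the off-diagonal terms are negligible. For distinct $m, n \leq L$ one has $|\log(m/n)| \geq \log(1 + 1/L) \geq \tfrac{1}{L+1}$, so, using the hypothesis $L \leq T/(\log T)^{2}$, we get $|\xi_{m,n}| \geq \tfrac{T}{(L+1)\log T} \gg \log T$, whence $e^{-\xi_{m,n}^{2}/2} \ll e^{-c(\log T)^{2}}$ for an absolute $c > 0$. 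Summing the moduli of the off-diagonal contributions and applying \cref{CscrM-INEsRt} in the form $\sum_{m,n \leq L}|f(m)f(n)| \leq L\sum_{n \leq L}|f(n)|^{2}$ bounds them by $\ll \tfrac{T}{\log T}\, L\, e^{-c(\log T)^{2}}\sum_{n \leq L}|f(n)|^{2} \ll \tfrac{T^{2}}{(\log T)^{3}}e^{-c(\log T)^{2}}\sum_{n \leq L}|f(n)|^{2}$, which is smaller than $\tfrac{1}{T}\cdot\sqrt{2\pi}\tfrac{T}{\log T}\sum_{n \leq L}|f(n)|^{2}$ for $T$ large (indeed smaller than any fixed power of $T$ times it). Adding the diagonal and off-diagonal estimates yields the lemma.

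There is no genuine obstacle here: the only step needing care is the off-diagonal bound, and it is precisely the condition $L \leq T/(\log T)^{2}$ that makes the phase $\xi_{m,n}$ at least a constant multiple of $\log T$ in size for every off-diagonal pair, so that the rapid decay of the Gaussian annihilates the off-diagonal sum with enormous room to spare. Equivalently, one may say that $\Phi$ has an entire, super-exponentially decaying Fourier transform, so the smoothed integral $\int |R(t)|^{2}\Phi(\cdots)\sd t$ isolates the near-diagonal frequencies of $|R|^{2}$, of which only the exact diagonal survives given the spacing of $\{\log n : n \leq L\}$ relative to the window width $T/\log T$.
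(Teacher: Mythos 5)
Your proof is correct and follows essentially the same route as the paper: expand $|R(t)|^{2}$, apply the Gaussian Fourier identity to each term, keep the diagonal as the main term, and kill the off-diagonal terms by noting that $L \leq T/(\log T)^{2}$ forces the Gaussian argument to be $\gg \log T$, together with the bound $\sum_{m,n\leq L}|f(m)f(n)| \leq L\sum_{n\leq L}|f(n)|^{2}$ from \cref{CscrM-INEsRt}. The only cosmetic difference is that you quantify the off-diagonal decay as $e^{-c(\log T)^{2}}$ while the paper writes it as $\Phi(\tfrac12\log T)$; the content is identical.
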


		\begin{proof}
			It holds from the definition of $R(t)$ that
			\begin{align}
				\int_{-\infty}^{\infty}|R(t)|^{2}\Phi\l( \frac{t - 3T / 2}{T / \log{T}} \r)\sd t
				= \sqrt{2\pi} \frac{T}{\log{T}} \sum_{m, n \leq L} f(m) \bar{f(n)} \l( \frac{m}{n} \r)^{-3 \i T / 2} \Phi\l( \frac{T}{\log{T}} \log{\frac{m}{n}} \r)
			\end{align}
			since $\int_{-\infty}^{\infty} \Phi(u) e^{-\i x u} \sd u = \sqrt{2\pi} \Phi(x)$.
			Using \cref{INEsRt}, we find that
			\begin{align}
				\l|\sum_{\substack{m, n \leq L\\ m\not= n}} f(m) \bar{f(n)} \l( \frac{m}{n} \r)^{3 \i T / 2} \Phi\l( \frac{T}{\log{T}} \log{\frac{m}{n}} \r)\r|
				&\leq \Phi\l(\frac{T}{\log{T}} \frac{(\log{T})^{2}}{2T}\r)\sum_{m, n \leq L}|f(m)f(n)|\\
				&\leq \Phi\l(\tfrac{1}{2}\log{T}\r) L \sum_{n \leq L}|f(n)|^{2}
				\leq \frac{1}{T} \sum_{n \leq L}|f(n)|^{2}.
			\end{align}
			Adding the diagonal-terms to this, we complete the proof of \cref{lem5MSMA}.
		\end{proof}

		\begin{lemma}	\label{lem_STF}
			Let $V$ be an analytic function in the horizontal strip $\set{z \in \CC}{-\frac{3}{2} \leq \Im z \leq 0}$ satisfying
			$
				\ds{\sup_{-\frac{3}{2} \leq y \leq 0}|V(x + \i y)| \ll (|x| \log^{2}{x})^{-1}}.
			$
			For any $v \in \RR$, we have
			\begin{align}
				&\int_{-\infty}^{\infty}\log\zeta(\tfrac{1}{2} + \i(t + v))V(t)\sd t\\
				&= \sum_{n = 2}^{\infty}\frac{\Lam(n)}{n^{\frac{1}{2} + \i v}\log{n}}\hat{V}\l( \frac{\log{n}}{2\pi} \r)
				+ 2\pi \sum_{\b > \frac{1}{2}}\int_{0}^{\b - \frac{1}{2}}V(\gamma - v - \i\s)\sd\s
				- 2\pi \int_{0}^{\frac{1}{2}}V(- v - \i\s)\sd\s.
			\end{align}
			Here, $\hat{V}$ is the Fourier transform of $V$ defined by $\hat{V}(z) = \int_{-\infty}^{\infty}V(x)e^{-2\pi \i x z}\sd x$.
		\end{lemma}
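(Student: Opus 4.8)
The plan is to reduce the identity to a residue computation. Fix any real $\sigma_{0} > 1$. For every $t$ such that the horizontal segment $\{\s + \i(t+v) : \tfrac{1}{2} \le \s \le \sigma_{0}\}$ avoids the pole $s = 1$ of $\zeta$ and all zeros of $\zeta$---i.e.\ for all $t$ outside a countable set, hence for almost every $t$---the branch of $\log\zeta$ entering the definition of $S$ satisfies
\begin{align}
	\log\zeta(\tfrac{1}{2} + \i(t+v)) = \log\zeta(\sigma_{0} + \i(t+v)) - \int_{1/2}^{\sigma_{0}} \frac{\zeta'}{\zeta}(\s + \i(t+v))\sd\s .
\end{align}
I would substitute this into $\int_{-\infty}^{\infty}\log\zeta(\tfrac{1}{2}+\i(t+v))V(t)\sd t$ and split it into the two resulting pieces. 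Throughout, the rapid decay $V(x+\i y) \ll (|x|\log^{2}|x|)^{-1}$, together with the standard growth bounds for $\zeta'/\zeta$ and $\log\zeta$ away from zeros in the strip $\tfrac{1}{2} \le \Re s \le \sigma_{0}$, will justify every interchange of summation, integration and contour shift below (choosing the abscissae that tend to infinity so as to avoid the ordinates of zeros, so that the decay of $V$ beats the $\log^{2}$-type growth).

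For the first piece, expanding $\log\zeta(\sigma_{0} + \i(t+v)) = \sum_{n\ge 2}\Lam(n)n^{-\sigma_{0}-\i(t+v)}/\log n$ and integrating termwise produces $\sum_{n \ge 2}\Lam(n)n^{-\sigma_{0}-\i v}(\log n)^{-1}\hat{V}(\tfrac{\log n}{2\pi})$, since $\int_{-\infty}^{\infty}V(t)n^{-\i t}\sd t = \hat{V}(\tfrac{\log n}{2\pi})$. For the second piece, $-\int_{1/2}^{\sigma_{0}}(\int_{-\infty}^{\infty}\frac{\zeta'}{\zeta}(\s+\i(t+v))V(t)\sd t)\sd\s$, I would fix $\s \in (\tfrac{1}{2},\sigma_{0})$, view $t \mapsto \frac{\zeta'}{\zeta}(\s+\i(t+v))V(t)$ as a meromorphic function, and shift the $t$-contour down from $\Im t = 0$ to $\Im t = -(\sigma_{0}-\s)$, on which $\Re(\s+\i(t+v)) = \sigma_{0}$ and $\frac{\zeta'}{\zeta} = -\sum_{n}\Lam(n)n^{-\sigma_{0}-\i(t+v)}$. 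A further vertical shift of the Fourier integral identifies the value of the shifted integral as $-\sum_{n \ge 2}\Lam(n)n^{-\s-\i v}\hat{V}(\tfrac{\log n}{2\pi})$; integrating this over $\s \in (\tfrac{1}{2},\sigma_{0})$ and recombining with the first piece yields $\sum_{n \ge 2}\Lam(n)n^{-1/2-\i v}(\log n)^{-1}\hat{V}(\tfrac{\log n}{2\pi})$, the main term of the lemma.

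The two remaining terms come from the poles crossed in the downward shift. A zero $\rho = \b+\i\g$ of multiplicity $m_{\rho}$ makes $t \mapsto \frac{\zeta'}{\zeta}(\s+\i(t+v))$ have a simple pole at $t = (\g-v)+\i(\s-\b)$ with residue $-\i m_{\rho}$---the factor $\i^{-1}$ coming from $\frac{\d}{\d t}(\s+\i(t+v)) = \i$---and this pole lies in the strip $-(\sigma_{0}-\s) < \Im t < 0$ exactly when $\s < \b$; similarly $s = 1$ gives a simple pole at $t = -v + \i(\s-1)$ with residue $\i$, lying in the strip when $\s < 1$. Hence the shift replaces $\int_{\Im t = 0}\frac{\zeta'}{\zeta}(\s+\i(t+v))V(t)\sd t$ by $\int_{\Im t = -(\sigma_{0}-\s)}\frac{\zeta'}{\zeta}(\s+\i(t+v))V(t)\sd t$, minus $2\pi\sum_{\rho : \b>\s}m_{\rho}V((\g-v)+\i(\s-\b))$, plus (when $\s<1$) the term $2\pi V(-v+\i(\s-1))$. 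Integrating over $\s \in (\tfrac{1}{2},\sigma_{0})$ with the overall minus sign of the splitting, interchanging the sum with the integral, and changing variables ($\s \mapsto \b-\s$ for the zeros, $\s \mapsto 1-\s$ for the pole) turns these into $2\pi\sum_{\b>1/2}\int_{0}^{\b-1/2}V(\g-v-\i\s)\sd\s$ and $-2\pi\int_{0}^{1/2}V(-v-\i\s)\sd\s$, zeros counted with multiplicity, which are precisely the last two terms of the asserted formula.

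I expect the main difficulty to be the analytic bookkeeping rather than any single idea: one has to (i) justify the termwise integration, Fubini in the $(\s,t)$- and $(\s,\rho)$-variables, and the vanishing of the horizontal sides of the shift rectangles as $\Re t \to \pm\infty$, which rests on the decay of $V$ dominating the $\log^{2}$-type growth of $\zeta'/\zeta$ and $\log\zeta$ once the abscissae avoid zeros and on the local integrability of the logarithmic singularities of $\log\zeta$ and $\zeta'/\zeta$ along the relevant lines; and (ii) keep track of the factor $\i^{-1}$ in every residue, which is what produces the sign ``$+$'' in front of the sum over $\b>\tfrac{1}{2}$ and ``$-$'' in front of the $s=1$ term. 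Equivalently, the same computation can be packaged as a single contour shift of $\log\zeta(\tfrac{1}{2}+\i(z+v))V(z)$ from $\Im z = 0$ to $\Im z = -\tfrac{3}{2}$, wrapped around the vertical branch cuts of $\log\zeta$ issuing from the zeros with $\b>\tfrac{1}{2}$ and from $s=1$; the jumps $\pm 2\pi\i$ of $\log\zeta$ across those cuts give the same two terms.
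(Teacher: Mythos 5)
Your argument is correct, and it is in substance the standard explicit-formula computation that underlies this identity; note, however, that the paper does not prove \cref{lem_STF} at all but simply refers to (2.14) of \cite{Ts1986} and the discussion following it, so what you have written is a self-contained reconstruction of that argument rather than a divergence from the paper. The decomposition $\log\zeta(\tfrac12+\i(t+v))=\log\zeta(\sigma_0+\i(t+v))-\int_{1/2}^{\sigma_0}\frac{\zeta'}{\zeta}(\s+\i(t+v))\sd\s$, the downward shift picking up the poles of $\zeta'/\zeta$ at the zeros with $\b>\s$ and at $s=1$ with residues $-\i m_\rho$ and $\i$ in the $t$-variable, and the change of variables producing $2\pi\sum_{\b>1/2}\int_0^{\b-1/2}V(\g-v-\i\s)\sd\s-2\pi\int_0^{1/2}V(-v-\i\s)\sd\s$ all check out, as does the recombination giving the Dirichlet-series term at abscissa $\tfrac12$. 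Two small points should be fixed or made explicit. First, you say ``fix any real $\sigma_0>1$,'' but the shift down by $\sigma_0-\s$ must stay inside the strip $-\tfrac32\le\Im z\le 0$ where $V$ is assumed analytic; since $\s>\tfrac12$ this forces $\sigma_0\le 2$ (e.g.\ take $\sigma_0=7/4$), or alternatively cap the depth of the shift at $\min\{\sigma_0-\s,\tfrac32\}$ --- with $\sigma_0>2$ the argument as written leaves the domain of $V$. Second, the rearranged series $\sum_n\Lam(n)n^{-\s-\i v}\hat V(\tfrac{\log n}{2\pi})$ with $\s$ as small as $\tfrac12$, and hence the main term of the lemma itself, converges only because $\hat V(\xi)\ll e^{-3\pi\xi}$ for $\xi>0$, which follows from the same downward shift of the Fourier integral through the full strip; this is implicit in your ``further vertical shift'' step and in your justification of the $\s$-$n$ interchange, but it deserves a sentence.
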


		\begin{proof}
			This is obtained by (2.14) in \cite{Ts1986} and the argument below (2.14).
		\end{proof}

	\subsection{Proof of \cref{EFCVStRtPhi}}

			We write
			\begin{align}
				&\int_{-\infty}^{\infty}\log\zeta(\tfrac{1}{2} + \i(t \pm \tfrac{h}{2})) |R(t)|^{2} \Phi\l( \frac{t - 3T / 2}{T / \log{T}} \r)\sd t\\
				&= \sum_{m, n \leq L}f(m)\bar{f(n)} \int_{-\infty}^{\infty}\log\zeta(\tfrac{1}{2} + \i(t \pm \tfrac{h}{2})) \l( \frac{m}{n} \r)^{-\i t} \Phi\l( \frac{t - 3T / 2}{T / \log{T}} \r)\sd t.
			\end{align}
			We use \cref{lem_STF} with $V_{m, n}(z) = (n / m)^{\i z} \Phi((z - 3T / 2) / (T / \log{T}))$ to find that this equals
			\begin{multline}
				\sum_{m, n \leq L}f(m) \bar{f(n)} \Biggl\{\sum_{k = 2}^{\infty}\frac{\Lam(k)}{k^{\frac{1}{2} \pm \i h / 2}\log{k}}\hat{V_{m, n}}\l( \frac{\log{k}}{2\pi} \r)\\
				- 2\pi\int_{0}^{\frac{1}{2}}\l( \frac{m}{n} \r)^{-\s \pm \i (h / 2)}\Phi\l( \frac{\mp (h / 2) - \i\s - 3T / 2}{T / \log{T}} \r) \sd \s \Biggr\}
			\end{multline}
			under RH. The latter term is
			\begin{align}
				\ll \sum_{m, n \leq L}|f(m) f(n)| \times L^{1/2} \Phi(\log{T})
				\ll \frac{1}{T} \sum_{n \leq L}|f(n)|^{2}
			\end{align}
			by \cref{INEsRt}.
			Also, the former term is
			\begin{align}
				&= \sqrt{2\pi}\frac{T}{\log{T}}\sum_{m, n \leq L} \sum_{k = 2}^{\infty} f(m) \bar{f(n)}
				\frac{\Lam(k)}{\sqrt{k} \log{k}} k^{\mp \i h / 2} \l( \frac{km}{n} \r)^{-3\i T / 2} \Phi\l( \frac{T}{\log{T}}\log\l( \frac{km}{n} \r) \r)
			\end{align}
			since $\int_{-\infty}^{\infty} \Phi(u) e^{-\i x u} \sd u = \sqrt{2\pi} \Phi(x)$.
			Simple calculations using \cref{INEsRt} show that
			\begin{align}
				&\underset{km \not= n}{\sum_{m, n \leq L} \sum_{2 \leq k \leq T^{2}}} |f(m) f(n)| \frac{1}{k^{1/2}} \Phi\l( \frac{T}{\log{T}}\log\l( \frac{km}{n} \r) \r)\\
				&\ll L \sum_{n \leq L} |f(n)|^{2} \sum_{2 \leq k \leq T^{2}} \frac{1}{k^{1/2}} \Phi(\log{T})
				\ll \frac{1}{T^{2}} \sum_{n \leq L}|f(n)|^{2},
			\end{align}
			and that
			\begin{align}
				&\underset{km \not= n}{\sum_{m, n \leq L} \sum_{k > T^{2}}} |f(m) f(n)| \frac{1}{k^{1/2}} \Phi\l( \frac{T}{\log{T}}\log\l( \frac{km}{n} \r) \r)\\
				&\ll \sum_{m, n \leq L}|f(m) f(n)| \sum_{k > T^{2}} \frac{1}{k^{1/2}} k^{-T / 2\log{T}}
				\ll L \sum_{n \leq L} |f(n)|^{2} \times T^{-3}
				\ll \frac{1}{T^{2}} \sum_{n \leq L}|f(n)|^{2}.
			\end{align}
			Following these, we have
			\begin{align}
				&\int_{-\infty}^{\infty}\log\zeta(\tfrac{1}{2} + \i(t \pm \tfrac{h}{2})) |R(t)|^{2} \Phi\l( \frac{t - 3T / 2}{T / \log{T}} \r)\sd t\\
				&= \sqrt{2\pi}\frac{T}{\log{T}}\sum_{km \leq L} \frac{\Lam(k)}{\sqrt{k} \log{k}}k^{\mp \i h  /2} f(m)\bar{f(k m)}
				+ O\l( \frac{1}{T}\sum_{n \leq L}|f(n)|^{2} \r).
			\end{align}
			This also leads to
			\begin{align}
				&\int_{-\infty}^{\infty}\l\{ S(t + \tfrac{h}{2}) - S(t - \tfrac{h}{2}) \r\} |R(t)|^{2} \Phi\l( \frac{t - 3T / 2}{T / \log{T}} \r)\sd t\\
				&= -\sqrt{2\pi}\frac{T}{\log{T}}\frac{2}{\pi}\Re\sum_{km \leq L} \frac{\Lam(k)}{\sqrt{k} \log{k}} \sin(\tfrac{h}{2}\log{k}) f(m)\bar{f(k m)}
				+ O\l( \frac{1}{T}\sum_{n \leq L}|f(n)|^{2} \r).
			\end{align}
			Thus, we complete the proof of \cref{EFCVStRtPhi}.
			\qed

	\subsection{\textbf{Proof of \cref{LBStvR}}}\mbox{}
		Let $L, T$ be large numbers that satisfy $L \leq T / (\log{T})^{2}$, and let $h > 0$.
		Those parameters are chosen later.
		Write
		\begin{align}
			I = \int_{-\infty}^{\infty}\l\{S(t + \tfrac{h}{2}) - S(t - \tfrac{h}{2}) \r\} |R(t)|^{2} \Phi\l( \frac{t - 3 T / 2}{T / \log{T}} \r)\sd t.
		\end{align}
		We then find by \cref{EFCVStRtPhi} that
		\begin{align}	\label{ORS1UCp0}
			I
			= -\frac{\sqrt{2\pi} T}{\log{T}}\frac{2}{\pi}\Re\sum_{2 \leq k \leq L} \frac{\Lam(k)}{\sqrt{k} \log{k}} \sin(\tfrac{h}{2}\log{k})\sum_{k m \leq L}f(m)\bar{f(k m)}
			+ O\l( \frac{1}{T}\sum_{n \leq L}|f(n)|^{2} \r).
		\end{align}
		First, we show that
		\begin{align}
			I
			&= \int_{4T/3 \leq t \leq 5T/3}\l(S(t + \tfrac{h}{2}) - S(t - \tfrac{h}{2})\r) |R(t)|^{2} \Phi\l( \frac{t - 3T / 2}{T / \log{T}} \r)\sd t\\
			\label{ORS1UCp1}
			&\quad+ O\l( \frac{1}{T} \sum_{n \leq L}|f(n)|^{2} \r).
		\end{align}
		Using \cref{CscrM-INEsRt} and the estimate $S(t) \ll \log(|t| + 3)$, we find by simple calculations that
		\begin{align}
			\int_{t \leq 4T / 3} \l(S(t + \tfrac{h}{2}) - S(t - \tfrac{h}{2})\r) |R(t)|^{2} \Phi\l( \frac{t - 3T / 2}{T / \log{T}} \r)\sd t
			&\ll \frac{1}{T} \sum_{n \leq L}|f(n)|^{2},
		\end{align}
		and that
		\begin{align}
			\int_{t > 5T / 3}\l(S(t + \tfrac{h}{2}) - S(t - \tfrac{h}{2})\r) |R(t)|^{2} \Phi\l( \frac{t - 3T / 2}{T / \log{T}} \r)\sd t
			&\ll \frac{1}{T} \sum_{n \leq L}|f(n)|^{2}.
		\end{align}
		Therefore, we obtain \cref{ORS1UCp1}.

		We extract extreme values of $\pm\{S(t + h) - S(t)\}$ by
		\begin{align}
			&\pm \int_{4T/3 \leq t \leq 5T/3}\l(S(t + \tfrac{h}{2}) - S(t - \tfrac{h}{2})\r) |R(t)|^{2} \Phi\l( \frac{t - 3T / 2}{T / \log{T}} \r)\sd t\\
			&\leq \sup_{T \leq t \leq 2T}\l\{\pm \l(S(t + h) - S(t)\r)\r\}
			\int_{-\infty}^{\infty}|R(t)|^{2} \Phi\l( \frac{t - 3T / 2}{T / \log{T}} \r)\sd t\\
			&\leq \frac{\sqrt{2\pi} T}{\log{T}} \l( 1 + O\l( \frac{1}{T} \r) \r) \sup_{T \leq t \leq 2T}\l\{\pm \l(S(t + h) - S(t)\r)\r\} \sum_{n \leq L}|f(n)|^{2}.
		\end{align}
		In the last step, we have used \cref{lem5MSMA}.
		Combining this with \cref{ORS1UCp0}, we obtain
		\begin{align}
			&\sup_{T \leq t \leq 2T}\l\{\pm\l(S(t + h) - S(t)\r)\r\} \times \sum_{n \leq L} |f(n)|^{2}\\
			&\geq \mp\l( 1 + O\l(\frac{1}{T}\r) \r)\frac{2}{\pi} \Re\sum_{2 \leq k \leq L} \frac{\Lam(k)}{\sqrt{k} \log{k}} \sin(\tfrac{h}{2}\log{k}) \sum_{k m \leq L}f(m)\bar{f(k m)}
			+ O\l( \frac{1}{T}\sum_{n \leq L}|f(n)|^{2} \r).
		\end{align}
		This completes the proof of \cref{LBStvR}.
		\qed

\section{\textbf{Proof of \cref{ORSi}}}\label{Proof of Theorem 1}
	In this section, we let $L$ denote a large number, $h \in [C / \log{L}, c / \log\log{L}]$ with positive constants $C$ large and $c$ small.
	We choose $f = f_{\pm}$ as the multiplicative function supported on square-free numbers such that for any prime $p$
	\begin{align}
		f_{\pm}(p)
		\ceq \pm \sqrt{Q} \cdot \frac{\sin(\frac{h}{2}\log{p})}{p^{1/2 + \k h} h \log{p}}
	\end{align}
	if $\exp(\sqrt{\log\log{L}} / \sqrt{h}) \eqc M < p \leq L$ and $f_{\pm}(p) = 0$ otherwise.
	Here, the numbers $\k$ and $Q$ are to be chosen as
	\begin{align}
		Q = 4 \k (1 - y) h \log{L} \bigg/ \pi \int_{0}^{h \log{L} / 2\pi} \frac{\sin^{2}(\pi u)}{(\pi u)^{3}}\frac{e^{2 \k \pi u} - 1}{e^{4 \k \pi u}}\sd u, \quad
		\k = \frac{\log(h \log{L})}{y h \log{L}},
	\end{align}
	where
	$
		y = \sqrt{\log(h \log{L}) / h \log{L}}.
	$
	It then holds that $Q \asymp h \log{L}$ and $\k, y$ are sufficiently small when $h \in [C / \log{L}, c / \log\log{L}]$.
	These parameters are determined to optimize quantity \cref{p1ELBRTsiRktrk} below.
	For this $f_{\pm}$, we give a lower bound of the ratio of resonator in the following proposition.

	\begin{proposition}	\label{ELBRTsiRktrk}
		Let $L$ be large, and let $C / \log{L} \leq h \leq c / \log\log{L}$ with positive constants $C$ large and $c$ small.
		Then we have
		\begin{align}
			&\pm \frac{2}{\pi} \sum_{k m \leq L} \frac{\Lam(k)}{\sqrt{k} \log{k}} \sin(\tfrac{h}{2}\log{k}) f_{\pm}(m) f_{\pm}(k m) \bigg/ \sum_{n = 1}^{\infty}f_{\pm}(n)^{2}\\
			&\geq \l\{1 + O\l(\sqrt{h \log\log{L}} + \min\l\{ \sqrt{\frac{\log^{3}(h \log{L})}{h \log{L}}}, \frac{(\log\log{L})^{3/2}}{h^{3/2} \log{L}} \r\}\r) \r\}\sqrt{\frac{h}{\pi} \log{L}}.
		\end{align}
	\end{proposition}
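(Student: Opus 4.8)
I would first reduce the numerator to a sum over primes. Since $f_{\pm}$ is supported on squarefree integers, $f_{\pm}(p^{j}m)=0$ whenever $j\ge 2$, so in the sum over $km\le L$ only the terms with $k=p$ prime survive, where $\Lam(p)/(\sqrt{p}\log p)=p^{-1/2}$; using multiplicativity, $f_{\pm}(pm)=f_{\pm}(p)f_{\pm}(m)$ when $(m,p)=1$ and the term vanishes otherwise, so (the two sign factors cancelling) the numerator becomes
\[
	\frac{2\sqrt Q}{\pi}\sum_{M<p\le L}\frac{\sin^{2}(\tfrac h2\log p)}{p^{1+\k h}\,h\log p}\sum_{\substack{m\le L/p\\(m,p)=1}}f_{\pm}(m)^{2},
\]
while the denominator is the Euler product $\sum_{n=1}^{\infty}f_{\pm}(n)^{2}=\prod_{M<q\le L}(1+f_{\pm}(q)^{2})\eqc\Pi$. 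The task is therefore to bound $\frac{2\sqrt Q}{\pi}\sum_{M<p\le L}\frac{\sin^{2}(\tfrac h2\log p)}{p^{1+\k h}h\log p}\,\mathcal R(p)$ from below, where $\mathcal R(p)\ceq\Pi^{-1}\sum_{m\le L/p,\,(m,p)=1}f_{\pm}(m)^{2}$.

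Next I would control $\mathcal R(p)$. Since $m\le L/p$ forces $P^{+}(m)\le L/p$, one compares $\sum_{m\le L/p,(m,p)=1}f_{\pm}(m)^{2}$ with the partial Euler product $\prod_{M<q\le L/p,\,q\ne p}(1+f_{\pm}(q)^{2})$, the discrepancy (coming from $m>L/p$) being controlled by Rankin's trick. The cutoff $M=\exp(\sqrt{\log\log L}/\sqrt h)$ keeps every $f_{\pm}(q)^{2}$ small, so that $1+f_{\pm}(q)^{2}=\exp(f_{\pm}(q)^{2}+O(f_{\pm}(q)^{4}))$ and the products compare cleanly, while the normalization of $Q$ — in particular the factor $1-y$ — is chosen precisely so that the typical size of $m$ under the measure $f_{\pm}(m)^{2}/\Pi$ stays bounded away from $L$, whence the restriction $m\le L/p$ costs nothing to leading order. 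This gives $\mathcal R(p)=(1+o(1))\prod_{L/p<q\le L}(1+f_{\pm}(q)^{2})^{-1}$ on the range $\tfrac h2\log p\lesssim 1/\k$ that carries the main contribution; for $p$ within $\exp(O(\sqrt{\log L}))$ of $L$, where Rankin's trick is too weak, one instead has $L/p<M$, so $\mathcal R(p)=\Pi^{-1}$ is exponentially small and those $p$ are discarded outright.

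Then I would pass from the sum over $p$ to an integral via the prime number theorem (the error $O(x\exp(-c\sqrt{\log x}))$ being far more than enough) and substitute $w=\tfrac h2\log t$. As $\prod_{L/p<q\le L}(1+f_{\pm}(q)^{2})^{-1}$ differs from $1$ only for $t$ so near $L$ that the relevant density is minuscule, the main term reduces to
\[
	\frac{\sqrt Q}{\pi}\int_{\frac12\sqrt{h\log\log L}}^{\frac12 h\log L}\frac{\sin^{2}w}{w^{2}}\,e^{-2\k w}\sd w.
\]
Using $\int_{0}^{\infty}\frac{\sin^{2}w}{w^{2}}e^{-sw}\sd w=\arctan(2/s)+\tfrac s4\log\tfrac{s^{2}}{s^{2}+4}$, this integral equals $\tfrac\pi2$ up to an error $O(\k\log\tfrac1\k)$ from the $e^{-2\k w}$-factor and $O(\sqrt{h\log\log L})$ from the lower limit; feeding the same closed form into the definition of $Q$ gives $\int_{0}^{h\log L/2}\frac{\sin^{2}w}{w^{3}}(e^{-2\k w}-e^{-4\k w})\sd w=\pi\k\,(1+O(\k\log\tfrac1\k))$, hence $Q=\frac{4h\log L}{\pi}(1+O(\k\log\tfrac1\k))$ and $\sqrt Q=2\sqrt{h\log L/\pi}\,(1+O(\k\log\tfrac1\k))$. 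Multiplying, and invoking the algebraic identity $\k=y=\sqrt{\log(h\log L)/(h\log L)}$ (so that $\k\log\tfrac1\k\ll\sqrt{\log^{3}(h\log L)/(h\log L)}$), yields the claimed bound with the first branch of the error term; the competing branch $(\log\log L)^{3/2}/(h^{3/2}\log L)=(\log M)^{3}/\log L$ comes from estimating the $e^{-2\k w}$-error differently, using the genuine upper limit $w\le\tfrac12 h\log L$ rather than extending to $+\infty$, which is sharper unless $h$ is very small, so one keeps the minimum of the two.

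I expect the main obstacle to be the control of the truncation $m\le L/p$: the plain Rankin bound on $\sum_{m>L/p}f_{\pm}(m)^{2}$ becomes weak as $p\to L$ and is not obviously sufficient for the dominant $p$, so one must instead exploit the concentration of $\log m$ about its mean — which the factor $1-y$ keeps safely below $\log L$ — and play this against the exponential smallness of $\mathcal R(p)$ in the range where Rankin fails. Everything else is routine bookkeeping once the cutoffs at $M$ and at $L$ are kept explicit through the asymptotic evaluation and the identity $\k=y$ together with the formula for $Q$ are used.
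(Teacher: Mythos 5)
Your overall route is the paper's: reduce the numerator to prime $k=p$, write $\sum_{n}f_{\pm}(n)^{2}$ as an Euler product, fight the truncation $m\le L/p$ with Rankin's trick, evaluate the prime sums by the prime number theorem to produce the integrals $\varphi_{2},\varphi_{3}$, and exploit the normalization of $Q$ together with $\kappa=y$. The genuine gap is exactly the step you yourself call ``the main obstacle'': the truncation is never actually controlled. Your per-$p$ claim $\mathcal{R}(p)=(1+o(1))\prod_{L/p<q\le L}(1+f_{\pm}(q)^{2})^{-1}$ is asserted, not proved; your two regimes ($\tfrac{h}{2}\log p\lesssim 1/\kappa$, and $p$ within $\exp(O(\sqrt{\log L}))$ of $L$) leave the whole intermediate range $\tfrac{h}{2}\log p\gg 1/\kappa$, $L/p\ge M$ untreated, and the identification of the second regime with ``$L/p<M$'' only holds for the smallest admissible $h$, since $\log M=\sqrt{\log\log L/h}$ is typically far smaller than $\sqrt{\log L}$; finally, ``concentration of $\log m$ about its mean'' is left as a black box. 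In fact the difficulty you anticipate disappears if you do not insist on a per-$p$ asymptotic. The paper keeps the Rankin correction with its explicit $p$-dependence, $\sum_{m\le L/p,\,p\nmid m}f_{\pm}(m)^{2}\ge\bigl(1-(1+O(\cdot))(p/L)^{\kappa h}\prod_{q}\tfrac{1+f_{\pm}(q)^{2}q^{\kappa h}}{1+f_{\pm}(q)^{2}}+O(\cdot)\bigr)\sum_{n}f_{\pm}(n)^{2}$, and only then sums over $p$: since $(p/L)^{\kappa h}p^{-1-\kappa h}=L^{-\kappa h}p^{-1}$, the aggregated correction is $\ll\varphi(\mathscr{L})\exp\bigl(-\kappa h\log L+(1+o(1))\tfrac{\pi}{4}Q\varphi_{3}(\mathscr{L};\kappa)\bigr)=\exp\bigl(-(1+o(1))\kappa y\,h\log L\bigr)\ll(h\log L)^{-1}$ by the very definition of $Q$ (this is what the factor $1-y$ buys), uniformly in all $p\le L$ and with no case analysis. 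If you prefer your per-$p$ formulation, the repair is to discard, by positivity of all terms, every $p$ with $\tfrac{h}{2}\log p\ge 1/\kappa$ (a loss $O(\kappa)$ in the main term) and to note that below that threshold plain Rankin already gives missing mass $\ll e^{O(1)}\exp(-(1+o(1))\kappa y h\log L)\ll(h\log L)^{-1}$ --- contrary to your closing assessment that Rankin is ``not obviously sufficient for the dominant $p$''.

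A second, smaller issue is quantitative: the proposition requires the explicit error $O\bigl(\sqrt{h\log\log L}+\min\{\cdot,\cdot\}\bigr)$, while your sketch runs on $(1+o(1))$'s. Your attribution of $\sqrt{h\log\log L}$ to the cutoff at $M$ and of $\kappa\log(1/\kappa)\asymp\sqrt{\log^{3}(h\log L)/(h\log L)}$ to the damping factor is correct, but your explanation of the second branch --- keeping the finite upper limit $w\le\tfrac12 h\log L$ instead of extending to $\infty$ --- does not produce $(\log\log L)^{3/2}/(h^{3/2}\log L)$: that quantity equals $(\log M)^{3}/\log L$, whereas the truncation error you describe is still of size $\kappa\log(h\log L)$, i.e.\ of the order of the first branch. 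So in the regime where the second branch of the minimum is the smaller one, your argument as written does not reach the stated bound and that part needs a separate treatment.
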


	\cref{ORSi} immediately follows from \cref{LBStvR} and this proposition in the case $L = T / (\log T)^{2}$.

	\begin{proof}
		Put $\a = \k h$.
		First, we observe by the definition of $f_{\pm}$ that
		\begin{align}
			\label{ELBRTsip1Rankintrk}
			&\pm \frac{2}{\pi} \sum_{k m \leq L} \frac{\Lam(k)}{\sqrt{k} \log{k}} \sin(\tfrac{h}{2}\log{k}) f_{\pm}(m) f_{\pm}(k m)
			= \sqrt{Q} \frac{2}{\pi} \sum_{M < p \leq L} \frac{\sin^{2}(\tfrac{h}{2}\log{p})}{p^{1 + \a} h \log{p}} \sum_{\substack{m \leq L / p\\ p \nmid m}}f_{\pm}(m)^{2}.
		\end{align}
		We find by the definition of $f_{\pm}$ and Rankin's trick that
		\begin{align}
			\sum_{\substack{m \leq L / p\\ p \nmid m}}f_{\pm}(m)^{2}
			&\geq \sum_{\substack{n = 1\\ p \nmid n}}^{\infty} f_{\pm}(n)^{2} - \l(\frac{p}{L}\r)^{\a}\sum_{\substack{n = 1\\ p \nmid n}}^{\infty}f_{\pm}(n)^{2}n^{\a}\\
			&= \prod_{\substack{M < q \leq L\\ q \not= p}}\l( 1 + f_{\pm}(q)^{2} \r) - \l(\frac{p}{L}\r)^{\a}\prod_{\substack{M < q \leq L\\ q \not= p}}\l(1 + f_{\pm}(q)^{2}q^{\a}\r)\\
			&= \frac{1}{1 + f_{\pm}(p)^{2}} \prod_{M < q \leq L}\l( 1 + f_{\pm}(q)^{2} \r) - \l(\frac{p}{L}\r)^{\a} \frac{1}{1 + f_{\pm}(p)^{2}p^{\a}} \prod_{M < q \leq L}\l(1 + f_{\pm}(q)^{2}q^{\a}\r)\\
			&= \l( \frac{1}{1 + f_{\pm}(p)^{2}} - \frac{\l( p/ L \r)^{\a}}{1 + f_{\pm}(p)^{2}p^{\a}}\prod_{M < q \leq L}\frac{1 + f_{\pm}(q)^{2} q^{\a}}{1 + f_{\pm}(q)^{2}} \r)
			\times \prod_{M < q \leq L}\l( 1 + f_{\pm}(q)^{2} \r).
		\end{align}
		Since the estimate $f_{\pm}(p)^{2} p^{\a} \ll Q / \log{L} \asymp h$ holds and $f_{\pm}$ is supported on square-free and $M < p \leq L$, this is also equal to
		\begin{align}
			\l\{ 1 - \l( 1 + O\l( \frac{Q}{\log{L}} \r) \r) \l( \frac{p}{L} \r)^{\a} \prod_{M < q \leq L}\frac{1 + f_{\pm}(q)^{2} q^{\a}}{1 + f_{\pm}(q)^{2}}
			+ O\l( \frac{Q}{\log{L}} \r) \r\} \sum_{n = 1}^{\infty}f_{\pm}(n)^{2}.
		\end{align}
		Observe that
		\begin{align}
			\prod_{M < q \leq L}\frac{1 + f_{\pm}(q)^{2} q^{\a}}{1 + f_{\pm}(q)^{2}}
			&= \prod_{M < q \leq L}\l(1 + \frac{f_{\pm}(q)^{2} (q^{\a} - 1)}{1 + f_{\pm}(q)^{2}}\r)\\
			&= \exp\l( \l( 1 + O\l( \frac{Q}{\log{L}} \r) \r)\sum_{M < q \leq L} f_{\pm}(q)^{2} (q^{\a} - 1) \r).
		\end{align}
		Routine calculations using the prime number theorem and partial summation show that
		\begin{align}
			\sum_{M < p \leq L} \frac{\sin^{2}(\tfrac{h}{2}\log{p})}{p^{1 + \a} h \log{p}}
			= \frac{\pi}{2} \vp_{2}(\sL; \k) + O\l( \sqrt{h \log\log{L}} \r),
		\end{align}
		\begin{align}
			\sum_{M < q \leq L} f_{\pm}(q)^{2} (q^{\a} - 1)
			= \l( 1 + O\l( \sqrt{h \log\log{L}} \r) \r)\frac{\pi}{4} Q \vp_{3}(\sL; \k),
		\end{align}
		and that
		\begin{align}
			\sum_{M < p \leq L} \frac{\sin^{2}(\tfrac{h}{2}\log{p})}{p h \log{p}}
			= \frac{\pi}{2} \vp(\sL) + O\l( \sqrt{h \log\log{L}} \r),
		\end{align}
		where $\sL = \frac{h}{2\pi} \log{L}$, and $\vp$ is as in the statement of Theorem \ref{LGZMO}, and $\vp_{2}$, $\vp_{3}$ are by
		\begin{align}
			\label{def_vp2}
			\vp_{2}(\sL; \k)
			&\ceq \int_{0}^{\sL} \l(\frac{\sin(\pi u)}{\pi u}\r)^{2} e^{-2\pi \k u} \sd u,\\
			\label{def_vp3}
			\vp_{3}(\sL; \k)
			&\ceq \int_{0}^{\sL} \frac{\sin^{2}(\pi u)}{(\pi u)^{3}} \frac{e^{2\pi \k u} - 1}{e^{4 \pi \k u}} \sd u.
		\end{align}
		Therefore, quantity \cref{ELBRTsip1Rankintrk} is
		\begin{align}	\label{p1ELBRTsiRktrk}
			\geq \l\{ \vp_{2}(\sL; \k) - \l( 1 + E_{1} \r)\vp(\sL)\exp\l( - \k h \log{L} + (1 + E_{2})\frac{\pi}{4} Q \vp_{3}(\sL; \k) \r) + E_{3} \r\} \sqrt{Q}.
		\end{align}
		Here, the error terms $E_{1}, E_{2}, E_{3}$ satisfy $E_{1} \ll h$, and $ E_{2}, E_{3} \ll \sqrt{h \log\log{L}}$.
		By the choice of $Q$, we find that this lower bound is
		\begin{align}
			\label{PLB}
			\l\{ \vp_{2}(\sL; \k) - (1 + E_{1})\vp(\sL)\exp\l(- \k (y - (1 - y) E_{2}) h \log{L} \r) + E_{3} \r\} \sqrt{\frac{4 \k (1 - y)}{\pi \vp_{3}(\sL; \k)}} \sqrt{h \log{L}}.
		\end{align}
		Noting the choices of $y$ and $\k$, we see that for $l = 1 / \k \sqrt{\log(1 / \k)}$
		\begin{align}
			\vp_{2}(\sL; \k)
			&= \int_{0}^{l} \l( \frac{\sin(\pi u)}{\pi u} \r)^{2} \l( 1 + O\l( \k u \r) \r) \sd u
			+ O\l(\int_{l}^{\sL} \frac{\d u}{u^{2}} \r)\\
			&= \int_{0}^{\infty} \l( \frac{\sin(\pi u)}{\pi u} \r)^{2} \sd u
			+ O\l( \k \log{l} + \frac{1}{l} \r)
			= \frac{1}{2} + O\l( \k \log(1 / \k) \r),
		\end{align}
		and that
		\begin{align}
			\vp_{3}(\sL; \k)
			&= 2 \k \int_{0}^{l} \l( \frac{\sin(\pi u)}{\pi u} \r)^{2} \l( 1 + O\l( \k u \r) \r) \sd u
			+ O\l(\int_{l}^{\sL} \frac{\d u}{u^{3}} \r)\\
			&= 2 \k \int_{0}^{\infty} \l( \frac{\sin(\pi u)}{\pi u} \r)^{2} \sd u
			+ O\l( \k^{2} \log{l} + \frac{1}{l^{2}} \r)
			= \k \l(1 + O\l( \k \log(1 / \k) \r) \r).
		\end{align}
		Hence, \cref{PLB} is
		\begin{align}
			\l\{1 + O\l(\sqrt{h \log\log{T}} + \min\l\{ \sqrt{\frac{\log^{3}(h \log{T})}{h \log{T}}}, \frac{(\log\log{T})^{3/2}}{h^{3/2} \log{T}} \r\}\r) \r\}\sqrt{\frac{h}{\pi} \log{L}},
		\end{align}
		which completes the proof of \cref{ELBRTsip1Rankintrk}.
	\end{proof}

	The above proof gives a good lower bound of the ratio of resonators.
	In particular, we obtain the following theorem by combining the lower bound with \cref{LGZMO}.

	\begin{theorem}
		Let $\mathscr{A}$ be the set of arithmetic functions such that the value at one is not equal zero.
		For any large $L$ and for $h \in [C / \log{L}, c / \log\log{L}]$ with positive constants $C$ large and $c$ small, we have
		\begin{align}
			&\sup_{f \in \mathscr{A}} \l\{\pm \Re \sum_{km \leq L} \frac{\Lam(k)}{\sqrt{k} \log{k}} \sin(\tfrac{h}{2}\log{k}) f(m) \bar{f(k m)} \bigg/ \sum_{n \leq L} |f(n)|^{2}\r\}
			= (1 + E) \sqrt{\frac{h}{\pi} \log{L}},
		\end{align}
		where
		\begin{align}
			E
			\ll \min\l\{\sqrt{\frac{\log^{3}(h \log{L})}{h \log{L}}}, \frac{(\log\log{L})^{3/2}}{h^{3/2} \log{L}}\r\} + \sqrt{h \log\log{L}}.
		\end{align}
	\end{theorem}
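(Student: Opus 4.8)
The plan is to sandwich the supremum between a lower bound, obtained by inserting the explicit resonator $f_{\pm}$ constructed above into \cref{ELBRTsiRktrk}, and an upper bound, obtained directly from the limitation result \cref{LGZMO}. The error term $E$ in the resulting equality will then be inherited, essentially verbatim, from \cref{ELBRTsiRktrk}, while the matching upper bound is soft.

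For the lower bound I would first record that $f_{\pm}\in\mathscr{A}$, since $f_{\pm}$ is multiplicative with $f_{\pm}(1)=1\neq 0$, and that $f_{\pm}$ is real-valued, so the $\Re$ in the statement plays no role. The crucial point is the identity established at the very start of the proof of \cref{ELBRTsiRktrk}, which exhibits $\pm\tfrac{2}{\pi}\sum_{km\le L}\tfrac{\Lam(k)}{\sqrt{k}\log k}\sin(\tfrac{h}{2}\log k)f_{\pm}(m)f_{\pm}(km)$ as $\sqrt{Q}$ times a sum of manifestly nonnegative terms, and is therefore $\ge 0$. Consequently, enlarging the denominator $\sum_{n\le L}f_{\pm}(n)^{2}$ that occurs in the theorem to the full sum $\sum_{n=1}^{\infty}f_{\pm}(n)^{2}$ can only decrease the ratio, so the supremum in the theorem is at least the left-hand side of \cref{ELBRTsiRktrk} with $f=f_{\pm}$, which that proposition bounds below by $\bigl\{1-O(E)\bigr\}\sqrt{\tfrac{h}{\pi}\log L}$.

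For the upper bound I would apply \cref{LGZMO} to an arbitrary $f\in\mathscr{A}$ (which is in particular not identically zero), with the choice $W=\tfrac{4h\log L}{\pi}$. Since $\vp$ is nondecreasing with $\vp(x)\le\vp(\infty)=\int_{0}^{\infty}(\sin\pi u/\pi u)^{2}\sd u=\tfrac12$, and $\log l\le\log L$ for $l\le L$, the maximum over $1\le l\le L$ in \cref{LGZMO} is at most $\tfrac{\sqrt{W}}{4}+\tfrac{h\log L}{\pi\sqrt{W}}$; this value of $W$ equalizes the two summands, so the right-hand side is exactly $\sqrt{\tfrac{h}{\pi}\log L}$. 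Thus, for every $f\in\mathscr{A}$,
\[
\Bigl|\,\Re\tfrac{2}{\pi}\sum_{km\le L}\tfrac{\Lam(k)}{\sqrt{k}\log k}\sin(\tfrac h2\log k)f(m)\bar{f(km)}\,\Bigr|\Big/\sum_{n\le L}|f(n)|^{2}\ \le\ \sqrt{\tfrac{h}{\pi}\log L}+O(h).
\]
Since $h=O\bigl(\sqrt{h/\log L}\,\bigr)\cdot\sqrt{\tfrac h\pi\log L}$ and $\sqrt{h/\log L}=o\bigl(\sqrt{h\log\log L}\,\bigr)$, the $O(h)$ contributes only an $o\bigl(\sqrt{h\log\log L}\,\bigr)$ relative error, smaller than the error already present in the lower bound; taking the supremum over $f$ then gives the matching upper bound $\le\bigl\{1+O(E)\bigr\}\sqrt{\tfrac h\pi\log L}$.

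Putting the two bounds together yields the asserted equality, with $E$ of the stated size. I do not expect a genuine obstacle here: the substantive work is entirely contained in \cref{ELBRTsiRktrk} and \cref{LGZMO}, both already proved, and what remains is just the two routine verifications flagged above — that the numerator is a sum of nonnegative terms (which licenses enlarging the denominator for the lower bound), and that the $O(h)$ from \cref{LGZMO} together with the truncation of the denominator to $n\le L$ is dominated by $E$, which follows at once from $1/\log L\ll\log\log L$. If anything deserves attention, it is only to check this last domination uniformly over the whole range $h\in[C/\log L,\,c/\log\log L]$, but that presents no difficulty.
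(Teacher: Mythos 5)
Your proof is correct and takes essentially the same route as the paper: the lower bound is quoted from \cref{ELBRTsiRktrk} with the resonator $f_{\pm}$, and the upper bound comes from \cref{LGZMO} with the choice $W=\tfrac{4}{\pi}h\log L$ and the bound $\vp\le\tfrac12$. Your additional verifications (that $f_{\pm}\in\mathscr{A}$, that nonnegativity of the numerator lets you pass between the denominators $\sum_{n\le L}f_{\pm}(n)^{2}$ and $\sum_{n=1}^{\infty}f_{\pm}(n)^{2}$, and that the $O(h)$ term is absorbed into $E$) simply make explicit details the paper leaves implicit.
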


	\begin{proof}
		The lower bound has already shown in \cref{ELBRTsip1Rankintrk}.
		The upper bound can be also proved by \cref{LGZMO}.
		Actually, we use \cref{LGZMO} with $W = \frac{4}{\pi} h \log L$ to obtain that
		\begin{align}
			&\bigg|\Re \sum_{km \leq L} \frac{\Lam(k)}{\sqrt{k} \log{k}} \sin(\tfrac{h}{2}\log{k}) f(m) \bar{f(k m)}\bigg| \bigg/ \sum_{n \leq L} |f(n)|^{2}\\
			&\leq \frac{\vp(\frac{h}{2\pi} \log L)}{2} \sqrt{\frac{4}{\pi} h \log T} + \frac{h \log L}{\pi \sqrt{\frac{4}{\pi} h \log L}} + O(h\sqrt{h \log L})
			\leq \l( 1 + O(h) \r)\sqrt{\frac{h}{\pi} \log{L}}
		\end{align}
		since $\vp(x) \leq 1/2$.
		Thus, we also obtain the upper bound.
	\end{proof}

\section{\textbf{Proof of \cref{LGZ}}}\label{Proof of Theorem 2}
	Let $r$ be a sufficiently large positive integer.
	For any $\theta' > 0$ and any large $T \geq T_{0}(r, \theta')$, we have
	\begin{align}
		\inf_{T \leq t \leq 2T}S(t + 2\pi r \theta' / \log T) - S(t)
		&\leq -\l( 1 + O\l( \frac{(\log r)^{3/2}}{r^{1/2}} \r) \r)\sqrt{2 r \theta'}\\
		&= -r \l( \sqrt{\frac{2}{r}\theta'} + O\l( \frac{(\log r)^{3/2}}{r} \r) \r)
	\end{align}
	by \cref{ORSi}.
	Therefore, if $\theta$ is chosen as
	\begin{align}
		\theta = 1 + \frac{\sqrt{2}}{\sqrt{r}} - C_{1} \frac{(\log r)^{3/2}}{r}
	\end{align}
	with $C_{1}$ a sufficiently large absolute positive constant, then the inequality
	\begin{align}
		\inf_{T \leq t \leq 2T}\l\{S(t + 2\pi r \theta' / \log T) - S(t)\r\}
		\leq -b
	\end{align}
	holds for any large $T$, where
	\begin{align}
		\theta' = 1 + \frac{\sqrt{2}}{\sqrt{r}} - \frac{C_{1}}{2} \frac{(\log r)^{3/2}}{r} > \theta,
	\end{align}
	and
	\begin{align}
		b = r(\theta' - 1) + \frac{C_{1}}{3} (\log r)^{3/2} > r(\theta' - 1).
	\end{align}
	Hence, by \cref{RStGZ}, we obtain the inequality of $\lam_{r}$ in \cref{INELGZ}.
	Similarly, we can prove the inequality of $\mu_{r}$.
	\qed

\section{\textbf{Proof of \cref{LGZMO}}}	\label{Sec:Proof_UBMO}

	Let $h$ be an arbitrary positive number, and let $L$ be large.
	Let $W$ be an arbitrary positive number.
	Define $g(n) = \sqrt{W} / h \sqrt{n} \log{n}$.
	Then we adapt Soundararajan's method, which uses an inequality similar to $2|f(m) \sin(\frac{h}{2} \log{k}) f(k m)| \leq |f(m)|^{2} \sin^{2}(\frac{h}{2}\log{k}) g(k) + |f(k m)|^{2} / g(k)$, to obtain
	\begin{align}
		&\bigg|\Re\frac{2}{\pi} \sum_{km \leq L} \frac{\Lam(k)}{\sqrt{k} \log{k}} \sin(\tfrac{h}{2}\log{k}) f(m) \bar{f(k m)}\bigg|\\
		&\leq \frac{1}{\pi} \sum_{2 \leq k \leq L} \frac{\Lam(k)}{\sqrt{k} \log{k}} \sum_{k m \leq L} \l( |f(m)|^{2} \sin^{2}(\tfrac{h}{2}\log{k}) g(k) + \frac{|f(k m)|^{2}}{g(k)}\r)\\
		&= \frac{1}{\pi} \sum_{n \leq L} |f(n)|^{2} \l( \sqrt{W}\sum_{p^{a} \leq L / n}  \frac{\sin^{2}(\frac{h}{2}\log{p^{a}})}{l^{2} p^{a} h \log{p}}
		+ \sum_{k \mid n} \frac{h \Lam(k)}{\sqrt{W}}\r)\\
		&= \frac{1}{\pi} \sum_{n \leq L} |f(n)|^{2} \l( \sqrt{W} \sum_{p^{a} \leq L / n}  \frac{\sin^{2}(\frac{h}{2}\log{p^{a}})}{a^{2} p^{a} h \log{p}} + \frac{h \log{n}}{\sqrt{W}}\r).
	\end{align}
	Routine calculations using the prime number theorem and partial summation show that
	\begin{align}
		\sum_{p^{a} \leq L / n}  \frac{\sin^{2}(\frac{h}{2}\log{p^{a}})}{a^{2} p^{a} h \log{p}}
		&= \sum_{p \leq L / n}  \frac{\sin^{2}(\frac{h}{2}\log{p})}{p h \log{p}} + O(h)\\
		&= \int_{2}^{L / n} \frac{\sin^{2}(\frac{h}{2}\log{\xi})}{\xi h (\log{\xi})^{2}}\sd\xi + O(h)
		= \frac{\pi}{2} \vp\l(\tfrac{h}{2\pi}\log(L / n)\r) + O(h).
	\end{align}
	Therefore, it holds that
	\begin{align}
		&\frac{1}{\pi} \sum_{n \leq L} |f(n)|^{2} \l\{ \sqrt{W} \sum_{p^{a} \leq L / n}  \frac{\sin^{2}(\frac{h}{2}\log{p^{a}})}{l^{2} p^{a} h \log{p}} + \frac{h \log{n}}{\sqrt{W}}\r\}\\
		&\leq \frac{1}{\pi} \l( \max_{n \leq L} \l\{ \sqrt{W} \frac{\pi}{2} \vp\l( \tfrac{h}{2\pi} \log(L / n) \r) + \frac{h \log{n}}{\sqrt{W}}\r\} + O(h \sqrt{W})\r) \sum_{n \leq L} |f(n)|^{2}\\
		&\leq \l( \max_{1 \leq l \leq L} \l\{ \frac{\sqrt{W}}{2} \vp\l( \tfrac{h}{2\pi} \log(L / l) \r) + \frac{h \log{l}}{\pi \sqrt{W}}\r\} + O(h \sqrt{W})\r) \sum_{n \leq L} |f(n)|^{2}.
	\end{align}
	Hence, we have
	\begin{align}
		&\bigg|\Re\frac{2}{\pi} \sum_{km \leq L} \frac{\Lam(k)}{\sqrt{k} \log{k}} \sin(\tfrac{h}{2}\log{k}) f(m) \bar{f(k m)}\bigg|\\
		&\leq \l( \max_{1 \leq l \leq L} \l\{ \frac{\sqrt{W}}{2} \vp\l( \tfrac{h}{2\pi} \log(L / l) \r) + \frac{h \log{l}}{\pi\sqrt{W}}\r\} + O(h \sqrt{W})\r) \sum_{n \leq L} |f(n)|^{2}.
	\end{align}
	Since the parameter $W$ is arbitrary, we have \cref{LGZMO1}.
	\qed

\section{\textbf{Limitations of $\lam_{1}$ and $\mu_{1}$ deduced from \cref{LGZMO}}}	\label{Sec:limitation}
	If $\tau(\xi; f) > 1$ for any $L \leq T$, any arithmetic function $f$, and any $\xi \geq \xi_{0}$, then the bound $\lam_{1} \geq \xi_{0}$ becomes the limitation for the Montgomery-Odlyzko method.
	Note that the right hand side of the inequality in \cref{LGZMO} is increasing for $L$, and hence we may assume $L = T$ in the following argument.
	Using \cref{LGZMO} with $l = T^{x}$, $W = 22.6$, we have
	\begin{align}
		\tau(\xi; f)
		\geq \xi - \max_{0 \leq x \leq 1}\l\{\frac{\sqrt{W}}{2} \vp\l( \xi (1 - x) \r) + \frac{2 \xi x}{\sqrt{W}}\r\} - o(1)
	\end{align}
	for any arithmetic function $f$ that is not identically zero.
	The right hand side exceeds one when $\xi \geq \xi_{0} = 3.022$.
	From this observation, we deduce that the limitation of $\lam_{1}$ for the Montgomery-Odlyzko method is $\lam_{1} \geq 3.022$.
	Similarly, we conclude that the limitation of $\mu_{1}$ for the Montgomery-Odlyzko method is $\mu_{1} \leq 0.508$ by using \cref{LGZMO} with $W = 4.9$.

	\begin{remark}
		We should mention the difference between our work and Goldston, Trudgian, Turnage-Butterbaugh \cite{GTTT2023}.
		The works \cite{CGG1983}, \cite{GTTT2023} used the inequality $|z w| \leq (|z|^{2} + |w|^{2})/2$ for $w, z \in \CC$, partial summation, and the inequality $|\sin x| \leq |x|$ for $x \in \RR$.
		On the other hand, we avoid using the final inequality and instead derive the result by a direct analysis of the sine function.
	\end{remark}

	\begin{acknowledgment*}
		The authors would like to thank Professor Timothy Trudgian for providing us with valuable comments.
		The first author is supported by JSPS KAKENHI Grant Number 24K16907.
		The second author is supported by JSPS KAKENHI Grant Number 25K17245.
		The third  author is supported by Grant-in-Aid for JSPS Research fellow Grant Number 24KJ1235.
	\end{acknowledgment*}

\end{document}